\setlist{noitemsep,labelwidth=*,leftmargin=*,align=left}
\setlist[enumerate,1]{label=(\alph*)}
\setlist[description]{font=\normalfont,leftmargin=!}
\renewcommand{\SetProgSty}[1]{\renewcommand{\ProgSty}[1]{\textnormal{\csname#1\endcsname{##1}}\unskip}}
\newcommand{\algass}{\ensuremath{\leftarrow}}
\renewcommand{\O}[1]{\ensuremath{\mathcal{O}(#1)}}
\theoremstyle{definition}
\newtheorem{theorem}{Theorem}[section]
\newtheorem{corollary}[theorem]{Corollary}
\newtheorem{definition}[theorem]{Definition}
\newtheorem{remark}[theorem]{Remark}
\newtheorem{lemma}[theorem]{Lemma}
\newtheorem{problem}[theorem]{Problem}
\newtheorem{observation}[theorem]{Observation}
\newtheorem{notation}[theorem]{Notation}
\newtheorem{assumption}[theorem]{Assumption}
\theoremstyle{definition}
\newtheorem{example}[theorem]{Example}
\newcommand{\m}{m}
\newcommand{\M}{\ensuremath{\mathcal{M}}}
\newcommand{\F}{\ensuremath{\mathcal{F}}}
\newcommand{\Set}[1]{\left\{ #1 \right\}}
\pgfplotsset{compat=1.18}
\journal{Journal of ...}
\begin{document}
	
\begin{frontmatter}
		
    \title{The Parametric Matroid $\ell$-Interdiction Problem}

    \author[a]{Nils Hausbrandt}\corref{mycorrespondingauthor}
    \cortext[mycorrespondingauthor]{Corresponding author}
    \ead{nils.hausbrandt@math.rptu.de}
    \author[a]{Stefan Ruzika}
    \ead{stefan.ruzika@math.rptu.de}
      
    \address[a]{Department of Mathematics, University of Kaiserslautern-Landau, 67663 Kaiserslautern, Germany}
    
    \begin{abstract}
        In this article, we introduce the parametric matroid $\ell$-interdiction problem, where $\ell\in\mathbb{N}_{>0}$ is a fixed number of elements allowed to be interdicted.
        Each element of the matroid's ground set is assigned a weight that depends linearly on a real parameter from a given interval.
        The goal is to compute, for each possible parameter value, a set of $\ell$-most vital elements with corresponding objective value the deletion of which causes a maximum increase of the weight of a minimal basis.
        We show that such a set, which of course depends on the parameter, can only change polynomially often if the parameter varies.
        We develop several exact algorithms to solve the problem that have polynomial running times if an independence test can be performed in polynomial time.
    \end{abstract}
    
    \begin{keyword}
        Matroid, Interdiction, Parametric Optimization
    \end{keyword}
    
\end{frontmatter}

\section{Introduction}
In this article, we address the three major research areas of matroid theory (\cite{wilson1973introduction,welsh2010matroid,oxley2011matroid}), interdiction problems (\cite{smith2013modern,smith2020survey}), and parametric optimization (\cite{eisner1976mathematical,carstensen1983complexity,bazgan2022approximation}).
The present work can be seen as a continuation of our article \cite{hausbrandt2024parametric} in which these three areas were combined for the first time. We refer to this article for a detailed survey of the literature in these research fields. For any two of them, there exist several articles at the intersection of these areas, see, among others, \cite{gusfield1979bound,eppstein1995geometric,fernandez1996using,agarwal1998parametric,katoh2002parametric,eppstein2023stronger} for parametric matroid problems, \cite{frederickson1998algorithms,joret2015reducing,linhares2017improved,chestnut2017interdicting,ketkov2024class,weninger2024interdiction} for interdiction versions of arbitrary matroids, and \cite{lin1994single} for a specific variant of a parametric shortest path interdiction problem.

However, not much has been done in the combination of all three areas.
Recently, \cite{hausbrandt2024parametric} have introduced the parametric matroid one-interdiction problem, where each element of the matroid's ground set is associated with a weight that depends linearly on a real parameter.
The goal is to find, for each possible parameter value, an element that, when being removed, maximizes the weight of a minimum weight basis.

This article is intended to continue this work, with the major difference that an arbitrary but fixed number $\ell\in\mathbb{N}_{>0}$ of elements is allowed to be interdicted.
The resulting problem is called parametric matroid $\ell$-interdiction problem.
The problem aims to find, for each parameter value, a set of $\ell$-most vital elements that, when being removed, increases the weight of a minimum weight basis as much as possible.
Furthermore, the goal is to determine the piecewise linear optimal interdiction value function that maps each parameter value to the weight of an optimal $\ell$-interdicted minimum weight basis.
The complexity of the problem is measured in the number of points of slope change (changepoints) of this function since either the set of $\ell$-most vital elements or the optimal $\ell$-interdicted minimum weight basis changes.
We show that there are at most $\mathcal{O}(\m^{\ell+1} k^{\frac{1}{3}}\alpha(k\ell))$ and $\O{m^2(k+\ell)^{\ell-1}k}$ many changepoints, where $m$ is the number of elements of the matroid, $k$ is the rank of the matroid, and $\alpha$ is a functional inverse of Ackermann's function.
We develop three exact algorithms that solve the problem in polynomial time if a single independence test can be performed in time polynomial in the input length.

In \cref{sec: prelims}, we provide the preliminaries including a formal definition of the parametric matroid $\ell$-interdiction problem.
\cref{sec: structure} extends known results from non-parametric graphical matroids to arbitrary matroids with parametric weights.
We show how these results lead to a refined analysis of the number of changepoints of the optimal interdiction value function.
The resulting tighter bounds on the number of changepoints imply faster running times of our algorithms, which are developed in \cref{sec: algorithms}.
    
\section{Preliminaries}\label{sec: prelims}
We introduce definitions and notations and recall some basic results for the parametric matroid one-interdiction problem.
This exposition also extends the results of \cite{hausbrandt2024parametric} to the case that not only one but an arbitrary number $\ell\in\mathbb{N}_{>0}$ of elements can be interdicted.
For two sets $A$ and $A'$ with $A \cap A'=\emptyset$, we denote the disjoint union of $A$ and $A'$ by $A\dot{\cup}A'$.
For a set $A$ and a singleton $\Set{a}$, we write $A-a$ or $A+a$ for $A\setminus\Set{a}$ or $A\cup\Set{a}$, respectively.

\textbf{Matroids.}
For a finite set $E$, a tuple $\M=(E,\F)$ with $\emptyset\neq\F\subseteq 2^E$ is called \emph{matroid} if the following properties hold:
\begin{enumerate}
    \item The empty set $\emptyset$ is contained in $\F$.
    \item If $A\in\F$ and $B\subseteq A$, then also $B\in\F$.
    \item If $A,B \in\F$ and $\vert B\vert < \vert A\vert$, then there exists an element $a\in A\setminus B$ such that $B + a\in\F$.
\end{enumerate}
The elements of $\F$ are called \emph{independent} sets of $\M$, while all other subsets of $E$ are called \emph{dependent}.
A \emph{basis} of $\M$ is an inclusion-wise maximum independent set of $\M$.
All bases have the same cardinality which is called the \emph{rank} $rk(\M)$ of $\M$.
A \emph{circuit} is a minimal dependent set.
We denote the cardinality of $E$ by $m$ and the rank of $\M$ by $k\coloneqq rk(\M)$.

For a subset $E'\subseteq E$, we denote the matroid $(E',\F')$ with $\F'\coloneqq\left\{F\in\F\colon\, F\subseteq E' \right\}$ by $\M|E'$.
For $F\subseteq E$, we write $\M_F$ for $\M|(E\setminus F)$ and, if $F=\Set{e}$ is a singleton, we write $\M_e$ for $\M|(E-e)$.

\textbf{Parametric matroids.}
In our setting, each element $e\in E$ is associated with a parametric weight $w(e,\lambda)= a_e + \lambda b_e$, where $a_e,b_e \in \mathbb{Q}$.
The parameter $\lambda$ is taken from a real interval $I\subseteq\mathbb{R}$, called the \emph{parameter interval}.
The weight of a basis $B$ is defined as $w(B,\lambda)\coloneqq\sum_{e\in B} w(e,\lambda)$.

In the parametric matroid problem, the goal is to compute a minimum weight basis $B_\lambda^*$ for each parameter value $\lambda \in I$ .
The function $w:I\to \mathbb{R}, \lambda\mapsto w(B_\lambda^*,\lambda)$ is called \emph{optimal value function}.
It is well known that $w$ is piecewise linear and concave, cf.\ \cite{gusfield1980sensitivity}.
A \emph{breakpoint} is a point $\lambda\in I$ at which the slope of $w$ changes.
For the parametric matroid problem, there is a tight bound of $\Theta(\m k^{\frac{1}{3}})$ on the number of breakpoints, cf.\ \cite{dey1998improved,eppstein1995geometric}.
A breakpoint can only occur at an \emph{equality point}, which is a point $\lambda\in I$ at which two weight functions $w(e,\lambda)$ and $w(f,\lambda)$ become equal.
Clearly, there are at most $\binom{m}{2} \in \O{m^2}$ many equality points.
In the following sections, we consider specific equality points.
To this end, let $\lambda(e,f)$ be the equality point where $w(e,\lambda) = w(f,\lambda)$.
If $w(e,\lambda)$ and $w(f,\lambda)$ never become equal, we set $\lambda(e,f)$ to $-\infty$.
We also write $\lambda(e\to f)$ for the equality point $\lambda(e,f)$, for which $w(e,\lambda) < w(f,\lambda)$ for $\lambda < \lambda(e,f)$ and, consequently, $w(e,\lambda) > w(f,\lambda)$ for $\lambda > \lambda(e,f)$.
We do not need to consider the case that $w(e,\lambda) = w(f,\lambda)$ for all $\lambda\in I$, since we can exclude it later in \cref{ass: l-interdiction}.

There is a simple algorithm for the parametric matroid problem. First, all equality points are computed and sorted in ascending order.
Before the first equality point, i.~e.\ for a value $\lambda$ that is smaller than the smallest equality point, a minimum weight basis $B_\lambda^*$ can be computed using the well-known greedy algorithm.
Then, at each equality point $\lambda(e\to f)$, an independence test of $B_\lambda^*-e+f$ is performed to obtain an $\mathcal{O}(\m^2(f(\m)+\log m))$ algorithm.
Here, $f(\m)$ is the time needed to perform a single independence test.

\textbf{Interdicting parametric matroids.}
Let $\ell\in \mathbb{N}_{>0}$ be the number of elements allowed to be interdicted.
We summarize our notation.
\begin{notation}
    Throughout this article, $\M=(E,\F)$ is a matroid with parametric weights $w(e,\lambda)=a_e+\lambda b_e$ for $\lambda\in I\subseteq \mathbb{R}$, where
    \begin{enumerate}
        \item[] $m$ is the cardinality of $E$,
        \item[] $k$ is the rank of $\M$, and
        \item[] $\ell$ is the number of elements allowed to be interdicted.
    \end{enumerate}
\end{notation}
\begin{definition}[Set of $\ell$-most vital elements]\label{def: set of l mves}
    Let $\lambda\in I$.
    For a subset $F\subseteq E$, we denote a minimum weight basis on $\M_F$ at $\lambda$ by $B_\lambda^F$.
    If $F=\Set{e}$, we write $B_\lambda^e$.
    If $\M_F$ does not have a basis of rank $k$, we set $w(B_\lambda^F,\lambda)=\infty$ for all $\lambda\in I$.
    A subset $F^*\subseteq E$ with $\vert F^*\vert=\ell$ is called a \emph{set of $\ell$-most vital elements} at $\lambda$ if $w(B_\lambda^{F^*},\lambda) \geq w(B_\lambda^F,\lambda)$ for all $F\subseteq E$ with $\vert F\vert=\ell$.    
\end{definition}
Although we generally consider the case $\ell>1$ in this article, it turns out to be helpful to interdict single elements optimally.
\begin{definition}[Most vital element]
    Let $\lambda\in I$.
    An element $e^*\in E$ is called \emph{most vital element} at $\lambda$ if $w(B_\lambda^{e^*},\lambda) \geq w(B_\lambda^e,\lambda)$ for all $e\in E$.
\end{definition}
\begin{definition}[Optimal interdiction value function] \label{def: opt l-interdiction value function}
    For $F\subseteq E$ with $\vert F\vert=\ell$, we define the function $y_F$ by $y_F\colon I\to\mathbb{R}$, $\lambda\mapsto w(B_\lambda^F,\lambda)$ mapping the parameter $\lambda$ to the weight of a minimum weight basis of $\M_F$ at $\lambda$.
    For $\lambda\in I$, we define $y(\lambda)\coloneqq\max\Set{y_F(\lambda)\colon\, F\subseteq E,\;\vert F\vert=\ell}$ as the weight of an optimal $\ell$-interdicted matroid at $\lambda$.
    The \emph{optimal interdiction value function} $y$ is then defined via 
    $y\colon I\to\mathbb{R}$, $\lambda\mapsto y(\lambda)$.
\end{definition}
We are now ready to formulate the parametric matroid $\ell$-interdiction problem.
\begin{problem}[Parametric matroid $\ell$-interdiction problem]\label{prob: l-matroid}
    Given a matroid~$\M$ with parametric weights $w(e,\lambda)$, a parameter interval~$I$, and a number $\ell\in\mathbb{N}_{>0}$, the goal is to determine, for each $\lambda\in I$, a set of $\ell$-most vital elements $F^*$ and the corresponding objective function value $y(\lambda) = y_{F^*}(\lambda)$.
\end{problem}
\cref{prob: l-matroid} is already $\mathsf{NP}$-hard for a fixed parameter value $\lambda \in I$.
This follows from the $\mathsf{NP}$-hardness of the special case of the $\ell$-most vital edges problem with respect to graphical matroids, cf.\ \cite{frederickson1999increasing}.
In this article, we focus on the computation of an exact solution to the problem, i.~e.\ a set of $\ell$-most vital elements with corresponding objective value for each parameter value $\lambda\in I$.
We therefore assume that $\ell$ is constant and not part of the input.
Nevertheless, we investigate the influence of the parameter $\ell$ on the running time of our algorithms.
\begin{assumption}\label{ass: l-interdiction} We make the following assumptions.
    \begin{enumerate}
        \item The number $\ell$ of elements allowed to be interdicted is constant.
        \item \label{ass: b} There exists a basis $B_\lambda^F$ of cardinality $k$ for every $\lambda\in I$ and $F\subseteq E$ with $\vert F\vert=\ell$.
        \item There exists unique optimal bases $B_\lambda^*$ and $B_\lambda^F$ for every $\lambda\in I$ and $F\subseteq E$ with $\vert F\vert=\ell$.\label{ass: c}
        \item No two pairs of weights $w(e,\lambda)$ become equal simultaneously.\label{ass: d}
    \end{enumerate}
\end{assumption}
Assumptions \ref{ass: b}-\ref{ass: d} are without loss of generality.
Assumption \ref{ass: b} excludes the trivial case that, after removing a set $F$ of $\ell$ elements, the matroid $\M_F$ has no basis with rank $k$.
If there exists such a set $F$, then it is a set of $\ell$-most vital elements with objective value $y_F(\lambda)$ equal to infinity for each parameter value $\lambda\in I$.
We continue with Assumption \ref{ass: c}.
For any point $\lambda\in I$ which is not an equality point, the optimal bases $B_\lambda^*$ and $B_\lambda^F$ already have a unique weight.
At an equality point, ties can be solved by an arbitrary but fixed ordering of the elements $e\in E$.
We consider Assumption \ref{ass: d}.
If three or more weights become equal at an equality point $\lambda$, a sufficiently small $\varepsilon >0$ can be added to all but two weights.
This minimal change also minimally influences the optimal bases $B_\lambda^*$ and $B_\lambda^F$ for $F\subseteq E$.
This means that all functions $y_F$ and, therefore, also their point-wise maximum $y$ are not significantly changed.
We also refer the reader to \cite{fernandez1996using}, where this assumption was made in the context of parametric minimum spanning trees.
Without loss of generality, we can also exclude the case that two weight functions $w(e,\lambda)$ and $w(f,\lambda)$ are equal for all $\lambda\in I$.
Otherwise, all weight functions are parallel due to Assumption \ref{ass: d} and \cref{prob: l-matroid} is reduced to the non-parametric variant with a fixed $\lambda$.

\cref{ass: l-interdiction} implies the following observation and the subsequent definition in analogy to the case of $\ell=1$.
\begin{observation}\label{obs: y is upper env continuous}
    The optimal interdiction value function $y$, which is the upper envelope of the functions $y_F$, is piecewise linear and continuous. 
\end{observation}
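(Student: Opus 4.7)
The plan is to obtain both properties as direct consequences of what is already established for the (non-interdicted) parametric matroid problem, applied separately to each deletion minor $\M_F$, and then to take an upper envelope over finitely many such functions.

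First, I would fix an arbitrary $F\subseteq E$ with $\vert F\vert=\ell$. By \cref{ass: l-interdiction}\,\ref{ass: b} the matroid $\M_F$ has a basis of rank $k$, so $y_F(\lambda)=w(B_\lambda^F,\lambda)$ is the optimal value function of the parametric matroid problem on $\M_F$ with weight functions $w(e,\lambda)=a_e+\lambda b_e$ restricted to $E\setminus F$ over the parameter interval $I$. The result of \cite{gusfield1980sensitivity} recalled in \cref{sec: prelims} then applies verbatim to $\M_F$ and yields that $y_F$ is piecewise linear and concave on $I$; in particular $y_F$ is continuous.

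Second, I would observe that there are only finitely many admissible $F$, namely at most $\binom{m}{\ell}$ subsets of $E$ of cardinality $\ell$. By \cref{def: opt l-interdiction value function}, the optimal interdiction value function is
\[
    y(\lambda)=\max\bigl\{y_F(\lambda)\colon F\subseteq E,\;\vert F\vert=\ell\bigr\},
\]
i.e.\ the pointwise maximum (upper envelope) of this finite family of piecewise linear continuous functions. A standard argument — the common refinement of the finitely many linearity partitions of the $y_F$ yields a partition of $I$ on each piece of which every $y_F$ is affine, and on each such piece the maximum of finitely many affine functions is again piecewise linear — shows that $y$ is piecewise linear. Continuity of $y$ follows because the pointwise maximum of finitely many continuous functions is continuous.

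There is no real obstacle here; the only point requiring a moment of care is checking that each $y_F$ is finite on all of $I$ so that the max is well defined, which is exactly what \cref{ass: l-interdiction}\,\ref{ass: b} guarantees. Everything else reduces to invoking the already-cited parametric matroid result and the elementary fact that upper envelopes of finitely many piecewise linear continuous functions are piecewise linear and continuous.
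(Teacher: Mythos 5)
Your proposal is correct and follows essentially the same reasoning the paper intends: the text itself notes (just after \cref{def: l-change-break-interdiction-points}) that each $y_F$ is the optimal value function of $\M_F$ and, by Assumption~(b), is piecewise linear and continuous, and the observation then follows by taking the upper envelope over the finitely many $F$. The paper leaves the final envelope step implicit, which you correctly spell out.
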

\begin{definition}[Changepoints]
    \label{def: l-change-break-interdiction-points}
    The points of slope change of $y$ are called \emph{changepoints} and are partitioned into \emph{breakpoints} and \emph{interdiction points}. 
    A breakpoint $\lambda$ of $y$ occurs if a function $y_{F^*}$, where $F^*$ is a set of $\ell$-most vital elements before and after $\lambda$, has a breakpoint. 
    A point $\lambda$ is an interdiction point of $y$ if the set of $\ell$-most vital elements changes at $\lambda$. This corresponds to the case that a function $y_{F^*}$ intersects a function $y_{G^*}$, where $F^*$ and $G^*$ are different sets of $\ell$-most vital elements before and after $\lambda$, respectively.
\end{definition}
Note that $y_F$ is the optimal value function of the matroid $\M_F$ and, according to Assumption \ref{ass: b}, $y_F$ is continuous and piecewise linear.
Consider a set $F^*$ of $\ell$-most vital elements before and after a point $\lambda$ such that the function $y_{F^*}$ has a breakpoint at $\lambda$.
This point of slope change is then transferred to the function $y$ and we therefore also call these points breakpoints.
The situation is illustrated in \cref{fig: changepoints}.

\begin{figure}
    \centering
    \resizebox{9cm}{6cm}{
    \begin{tikzpicture}[
        declare function={
		F1(\x)= (\x < 4) * (2+0.5*\x) + (\x >= 4) * (8-1*\x);
		F2(\x)= (\x < 2) * (1.8+1.1*\x) + (\x >= 2) * (4.4-0.2*\x);
		F3(\x)= (\x < 1) * (2+1.5*\x) + and(\x >= 1, \x < 3) * (3.25+0.25*\x) + (\x >= 3) * (5.5-0.5*\x);
		Y(\x)= (\x < 0) * (2+0.5*\x)
        + and(\x >= 0,\x < 1) * (2+1.5*\x)
        + and(\x >= 1,\x < 1.705) * (3.25+0.25*\x)
        + and(\x >= 1.705,\x < 2) * (1.8+1.1*\x)
        + and(\x >= 2,\x < 2.55) * (4.4-0.2*\x)
        + and(\x >= 2.55,\x < 3) * (3.25+0.25*\x)
        + and(\x >= 3,\x < 3.55) * (5.5-0.5*\x)
        + and(\x >= 3.55,\x < 4) * (2+0.5*\x)
        + and(\x >= 4,\x < 4.5) * (8-1*\x)
        + (\x >= 4.5) * (4.4-0.2*\x);
	    },
    	]
		\begin{axis}[
			axis x line=middle, axis y line=middle,
			ymin=-0.5, ymax=5, ytick={},
			xmin=-1, xmax=9, xtick={},
            xtick={1,4.5},
            xticklabels={$\lambda'$,$\lambda''$},
            x label style={at={(axis description cs:0.98,-0.01)}},
            xlabel=$\lambda$,
            ytick={-2},
            ylabel=$y(\lambda)$,
            y label style={at={(axis description cs:-0.04,0.98)}},
			domain=-1:7,
			]
            \addplot [gray!50,line width=2.7] {Y(x)} node [pos=1,right] {};
			\addplot [dashed,very thick,] {F1(x)} node [pos=1,below right] {$y_{F_1}$};
            \addplot [loosely dashdotted,very thick,] {F2(x)} node [pos=1,below right] {$y_{F_2}$};
			\addplot [dotted,very thick] {F3(x)} node [pos=1,right] {$y_{F_3}$};
            \addplot[mark=none, dashed] coordinates {(1, 0) (1, 3.5)};
            \addplot[mark=none, dashed] coordinates {(4.5, 0) (4.5, 3.5)};
            \node[gray] at  (7.4,3.2) (B){$y$};
		\end{axis}
    \end{tikzpicture}
    }
    \caption{The figure shows three possible optimal value functions $y_{F_1}, y_{F_2}$, and $y_{F_3}$ for different subsets $F_1,F_2$, and $F_3$.
    The objective function $y$ is given by the upper envelope of the functions $y_{F_i}$ and its points of slope change are the changepoints.
    These are subdivided into breakpoints and interdiction points.
    The point $\lambda'$ is a breakpoint of $y_{F_3}$ and, since $y_{F_3}$  forms the upper envelope at this point, $\lambda'$ is also a breakpoint of $y$.
    The point $\lambda''$ is an interdiction point, as the functions $y_{F_1}$ and $y_{F_2}$ intersect in this point and before $\lambda''$, the upper envelope equals $y_{F_1}$, and after $\lambda''$, the upper envelope equals $y_{F_2}$.
    Consequently, the set of $\ell$-most vital elements changes accordingly from $F_1$ to $F_2$ at $\lambda''$.
    }
    \label{fig: changepoints}
\end{figure}
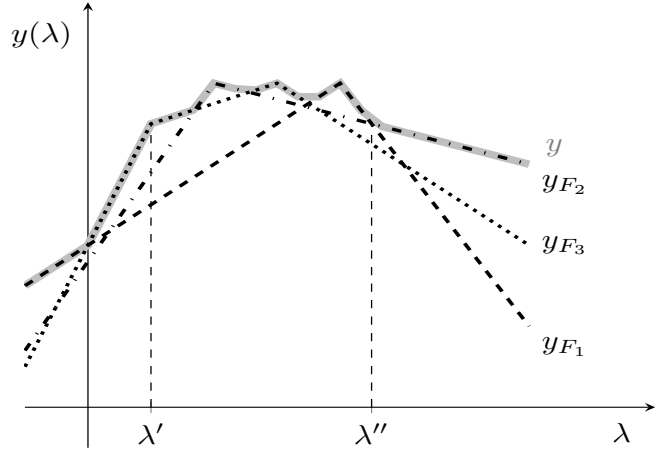

Note that the changepoints are exactly the points at which the solution of \cref{prob: l-matroid} changes.
Either the optimal interdiction strategy $F^*$ changes or the optimal $\ell$-interdicted minimum weight basis $B_\lambda^{F^*}$ changes.
In particular, when solving \cref{prob: l-matroid}, it is sufficient to compute one solution for a fixed value $\lambda$ between any two consecutive changepoints of $y$.
When computing bounds on the number of changepoints, we often handle the case $I=\mathbb{R}$.
This is without loss of generality, too, since there cannot be more changepoints on a subinterval of $\mathbb{R}$.

The concept of replacement elements is essential to measure the effect of interdicting one or more elements.
\begin{definition}[Replacement element]
    Let $\lambda \in I$.
    For $e\in B_\lambda^*$, we define the set of all \emph{replacement candidates} $R_\lambda(e)$ of $e$ at~$\lambda$ in the minimum weight basis as $R_\lambda(e)\coloneqq\Set{r\in E\setminus B_\lambda^* \;\colon\; B_\lambda^*-e+r \in \F}$.
    The \emph{replacement element} of $e$ at $\lambda$ is defined as $r_\lambda(e) = \arg\min\Set{w(r,\lambda)\colon\, r\in R_\lambda(e)}$.
\end{definition}
Later, we also compute replacement elements of elements of an interdicted basis $B_\lambda^F$ for $F\subseteq E$ with $\vert F\vert\leq\ell$.
So, if the context is not clear, we specify the basis for which the replacement elements are determined.
Note that the most vital element $e^*$ at $\lambda$ is given by the maximum difference between the weights of the elements of $B_\lambda^*$ and their replacement elements, that is $e^*=\arg\max\Set{w(r_\lambda(e),\lambda)-w(e,\lambda)\colon e\in B_\lambda^*}$.
The following lemma ensures that a replacement element $r_\lambda(e)$ actually replaces $e$ at $\lambda$ if $e$ is removed from $B_\lambda^*$.
\begin{lemma}[\cite{hausbrandt2024parametric}]\label{lem: replacement element}
    Let $\lambda \in I$ and $e\in B_\lambda^*$. Let $r_\lambda(e)$ be the replacement element of $e$ at $\lambda$ with respect to $B_\lambda^*$.
    Then, $B_\lambda^e = B_\lambda^* -e + r_\lambda(e)$.
\end{lemma}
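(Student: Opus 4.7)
The plan is to show that $B_\lambda^* - e + r_\lambda(e)$ is a basis of $\M_e$ of minimum weight; by uniqueness of the optimal basis (\cref{ass: l-interdiction}~\ref{ass: c}), this identifies it with $B_\lambda^e$. Feasibility and cardinality are immediate: $r_\lambda(e) \in R_\lambda(e)$ ensures $B_\lambda^* - e + r_\lambda(e) \in \F$, it clearly avoids $e$, and its cardinality is $k$, which matches the rank of $\M_e$ by \cref{ass: l-interdiction}~\ref{ass: b}. So the whole content of the lemma is the optimality statement.

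For optimality, I would argue by contradiction: suppose there is a basis $B'$ of $\M_e$ with $w(B',\lambda) < w(B_\lambda^* - e + r_\lambda(e),\lambda)$. Since $e \notin B'$ but $e \in B_\lambda^*$, the strong basis exchange property for the matroid $\M$ yields an element $f \in B' \setminus B_\lambda^*$ such that simultaneously $B_\lambda^* - e + f \in \F$ and $B' - f + e \in \F$. The first inclusion means $f \in R_\lambda(e)$, hence by definition of the replacement element $w(r_\lambda(e),\lambda) \leq w(f,\lambda)$. The second inclusion, combined with $|B' - f + e| = k$, says that $B' - f + e$ is a basis of $\M$, so by the minimality of $B_\lambda^*$ we obtain $w(B_\lambda^*,\lambda) \leq w(B',\lambda) - w(f,\lambda) + w(e,\lambda)$.

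Chaining these two inequalities gives
\[
w(B_\lambda^* - e + r_\lambda(e),\lambda) = w(B_\lambda^*,\lambda) - w(e,\lambda) + w(r_\lambda(e),\lambda) \leq w(B_\lambda^*,\lambda) - w(e,\lambda) + w(f,\lambda) \leq w(B',\lambda),
\]
contradicting the assumed strict inequality. Hence $B_\lambda^* - e + r_\lambda(e)$ is a minimum weight basis of $\M_e$, and uniqueness (\cref{ass: l-interdiction}~\ref{ass: c}) concludes $B_\lambda^e = B_\lambda^* - e + r_\lambda(e)$.

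The only delicate step is locating the exchange partner $f$ that is simultaneously a valid replacement for $e$ in $B_\lambda^*$ and makes $B' - f + e$ independent. This is exactly where the strong (symmetric) basis exchange axiom is used; the rest of the argument is just bookkeeping that turns the two independence facts into two weight inequalities and combines them. No non-trivial obstacle is expected beyond invoking the correct exchange property and using the definitions of $r_\lambda(e)$ and the minimality of $B_\lambda^*$.
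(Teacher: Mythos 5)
Your proof is correct. Note that this paper does not itself contain a proof of \cref{lem: replacement element} — it is stated as a citation of the earlier paper \cite{hausbrandt2024parametric} — so there is no in-paper argument to compare against line by line. Your argument is the standard and natural one: feasibility and cardinality are immediate from the definition of $R_\lambda(e)$ and from \cref{ass: l-interdiction}~\ref{ass: b}; for optimality you invoke the symmetric (strong) basis exchange property on the pair $(B_\lambda^*, B')$ with $e \in B_\lambda^* \setminus B'$ to obtain $f \in B' \setminus B_\lambda^*$ with both $B_\lambda^* - e + f$ and $B' - f + e$ bases. The first independence fact puts $f$ in $R_\lambda(e)$ and yields $w(r_\lambda(e),\lambda) \le w(f,\lambda)$ by minimality of $r_\lambda(e)$; the second, combined with optimality of $B_\lambda^*$ in $\M$, gives $w(B_\lambda^*,\lambda) - w(e,\lambda) + w(f,\lambda) \le w(B',\lambda)$. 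Chaining these two inequalities contradicts the assumed strict improvement, and uniqueness from \cref{ass: l-interdiction}~\ref{ass: c} then identifies the basis as $B_\lambda^e$. This is exactly the expected route, and the only non-routine ingredient — the symmetric exchange axiom — is correctly identified and applied. No gap.
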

The candidates for a most vital element at a point $\lambda$ can be restricted to elements of the minimum weight basis $B_\lambda^*$.
\begin{lemma}[\cite{hausbrandt2024parametric}]\label{lem: for mve only elements of B*}
    For any $\lambda\in I$, a most vital element is an element of $B_\lambda^*$.
\end{lemma}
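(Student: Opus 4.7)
The plan is to compare, for each element $e\in E$, the weight $w(B_\lambda^e,\lambda)$ to $w(B_\lambda^*,\lambda)$, and to show that elements outside $B_\lambda^*$ produce no gain whereas elements inside can only increase (weakly) the basis weight.

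First I would handle the case $e\notin B_\lambda^*$. Since $B_\lambda^*\subseteq E-e$, the set $B_\lambda^*$ is still independent in $\M_e$ and has cardinality $k$, so it is a basis of $\M_e$. Conversely, every basis of $\M_e$ is also a basis of $\M$. Hence the minimum weight basis $B_\lambda^e$ of $\M_e$ satisfies $w(B_\lambda^e,\lambda)\leq w(B_\lambda^*,\lambda)$ (because $B_\lambda^*$ is feasible in $\M_e$) and $w(B_\lambda^e,\lambda)\geq w(B_\lambda^*,\lambda)$ (because $B_\lambda^e$ is a basis of $\M$). Thus $w(B_\lambda^e,\lambda)=w(B_\lambda^*,\lambda)$, and so deleting an element outside $B_\lambda^*$ does not increase the minimum basis weight.

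Next I would handle the case $e'\in B_\lambda^*$. By \cref{lem: replacement element}, $B_\lambda^{e'}=B_\lambda^*-e'+r_\lambda(e')$. The key observation is that $w(r_\lambda(e'),\lambda)\geq w(e',\lambda)$: otherwise the swap $B_\lambda^*-e'+r_\lambda(e')$ would be a basis of $\M$ with strictly smaller weight than $B_\lambda^*$, contradicting the optimality of $B_\lambda^*$ in $\M$. Consequently, $w(B_\lambda^{e'},\lambda)\geq w(B_\lambda^*,\lambda)$.

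Combining both cases, the maximum of $w(B_\lambda^e,\lambda)$ over $e\in E$ is attained by some $e'\in B_\lambda^*$: any $e\notin B_\lambda^*$ achieves only $w(B_\lambda^*,\lambda)$, which is a lower bound for every element of $B_\lambda^*$. In particular we may always choose a most vital element inside $B_\lambda^*$. The only subtle point is the direction of the inequality in the replacement step, which follows directly from the definition of $B_\lambda^*$ as the minimum weight basis; everything else is a straightforward application of matroid basis exchange together with \cref{lem: replacement element}.
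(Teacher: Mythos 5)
Your proof is correct. Since the lemma is imported from \cite{hausbrandt2024parametric}, the present paper gives no proof of its own, but your two-case argument is the natural one: for $e\notin B_\lambda^*$ you observe that $B_\lambda^*$ remains a basis of $\M_e$ (hence of the same minimum weight), and for $e'\in B_\lambda^*$ you use \cref{lem: replacement element} together with the optimality of $B_\lambda^*$ to get $w(r_\lambda(e'),\lambda)\geq w(e',\lambda)$, so deleting a basis element cannot decrease the optimum. One small remark: you state the conclusion carefully as ``we may always choose a most vital element inside $B_\lambda^*$,'' which is exactly what your weak inequalities justify. If you additionally invoke Assumption~\ref{ass: l-interdiction}\ref{ass: c} (uniqueness of $B_\lambda^*$), the inequality $w(r_\lambda(e'),\lambda)>w(e',\lambda)$ becomes strict, and one gets the stronger statement that every most vital element lies in $B_\lambda^*$; but this refinement is not needed for the lemma as stated.
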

To compare the running times of our algorithms, we define the time for a single independence test and the time to compute a replacement element of a basis element.
For the latter, we also give an amortised version, as we later iteratively compute the replacement element for all elements of a basis.
\begin{definition}
    We define the following runtimes for operations on matroids:
    \begin{description}
        \item[$f(\m)$] the time needed to perform a single independence test.
        \item[$h(\m)$] the time needed to compute a replacement element $r_\lambda(e)$ for a given $\lambda\in I$ and $e\in B_\lambda^*$.
        \item[$H(m)$] the amortised cost for computing all $k$ replacement elements for a given $\lambda\in I$ and elements $e\in B_\lambda^*$.
    \end{description}
\end{definition}
The running time without amortised costs can easily be obtained since $H(m)\in \O{kh(m)}$.

There is an important connection between the theory of parametric matroids and the theory of matroid interdiction.
If an element $e$ leaves the optimal basis $B_\lambda^*$ at a breakpoint $\lambda(e\to f)$, then $e$ is swapped with its replacement element, i.~e.\ it holds that $r_\lambda(e)=f$ for $\lambda < \lambda(e\to f)$.
\begin{lemma}\label{lem: r(e) = f before swap}
    Let $\lambda_{i-1} <\lambda_i=\lambda(e\to f)<\lambda_{i+1}$ be three consecutive equality points.
    Then, $\lambda_i$ is a breakpoint of $w$ if and only if $r_\lambda(e) = f$ for $\lambda \in (\lambda_{i-1} ,\lambda_i]$ and $r_\lambda(f) = e$ for $\lambda \in (\lambda_i ,\lambda_{i+1}]$.
\end{lemma}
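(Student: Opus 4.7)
The plan is to exploit two structural facts: $B_\lambda^*$ is piecewise constant in $\lambda$ (a consequence of greedy) and changes only at equality points, and by Assumption~\ref{ass: d} only the pair $(e,f)$ has coinciding weights at $\lambda_i$. Consequently, the weight-sorted order of $E$ on $(\lambda_{i-1},\lambda_i)$ differs from that on $(\lambda_i,\lambda_{i+1})$ precisely by an adjacent transposition of $e$ and $f$ (any third element is strictly above or below both $w(e,\lambda_i)$ and $w(f,\lambda_i)$ and therefore retains its relative position on both sides).

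For the forward direction, let $B$ and $B'$ denote the constant minimum weight bases on the two open intervals; since $\lambda_i$ is a breakpoint, $B\neq B'$. I would run greedy on both sortings in parallel: both runs add the same elements up to the position of $\{e,f\}$, reaching a common partial independent set $S$. The two runs can diverge only if $S+e$ and $S+f$ are both independent while $S+e+f$ is not; in that case $S+e$ and $S+f$ have equal rank and hence identical closure, so greedy makes matching decisions on every later element. This forces $B'=B-e+f$, which already implies $e\in B$ and $f\in R_\lambda(e)$ on $(\lambda_{i-1},\lambda_i]$. To upgrade to $r_\lambda(e)=f$, suppose some $g\in R_\lambda(e)\setminus\{f\}$ satisfies $w(g,\lambda)<w(f,\lambda)$ on the interval. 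Optimality of $B$ gives $w(e,\lambda)\leq w(g,\lambda)$, so taking $\lambda\to\lambda_i^-$ squeezes $w(g,\lambda_i)=w(e,\lambda_i)=w(f,\lambda_i)$; this is a second coincidence at $\lambda_i$, contradicting Assumption~\ref{ass: d}. The symmetric argument on the right of $\lambda_i$ yields $r_\lambda(f)=e$.

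For the reverse direction, the hypotheses immediately yield $e\in B_\lambda^*$ and $f\notin B_\lambda^*$ on $(\lambda_{i-1},\lambda_i]$ and $f\in B_\lambda^*$ and $e\notin B_\lambda^*$ on $(\lambda_i,\lambda_{i+1}]$, so the basis indeed changes at $\lambda_i$. The same greedy/Assumption~\ref{ass: d} argument as above forces the change to be the single swap $B'=B-e+f$. Since the slope of $w$ on a basis-constancy interval equals $\sum_{g\in B_\lambda^*}b_g$, the slope jumps by $b_f-b_e$. Finally $b_e\neq b_f$: otherwise $w(e,\cdot)-w(f,\cdot)$ would be a nonzero constant (the identically zero case is excluded by the preliminaries), so the two weight functions would never coincide and $\lambda(e,f)$ would be $-\infty$, contradicting $\lambda(e,f)=\lambda_i$. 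Hence the slope genuinely changes and $\lambda_i$ is a breakpoint of $w$.

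The main obstacle is the greedy lockstep analysis isolating $B\triangle B'=\{e,f\}$. The rank/closure observation that $S+e$ and $S+f$ share a closure when $S+e+f$ is dependent is the load-bearing step: it prevents the one-transposition perturbation of the sorting from propagating beyond $\{e,f\}$ and localizes the entire basis change to a single swap, which is what ultimately lets both the implication $B'=B-e+f$ and the identification of replacement elements go through.
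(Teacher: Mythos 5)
Your proof is correct, and it takes a genuinely different route from the paper for the forward direction. The paper simply invokes Lemma~3.6 of \cite{hausbrandt2024parametric} (the companion paper on the $\ell=1$ case) to get that a breakpoint of $w$ corresponds to the swap $B'=B-e+f$ with $r_\lambda(e)=f$ on the left and $r_\lambda(f)=e$ on the right; you instead reprove this from scratch with the greedy-lockstep argument, and the key closure observation — that when $S+e,S+f$ are independent but $S+e+f$ is not, the two partial sets have equal rank and hence equal closure, so all later greedy decisions match — is exactly the load-bearing step and is sound. Your ``upgrade'' to $r_\lambda(e)=f$ via Assumption~(d) (a cheaper replacement candidate $g$ would force a triple coincidence at $\lambda_i$) is also correct; you should just note that $f\in R_\lambda(e)$ follows because $B-e+f=B'$ is a basis, which you implicitly use. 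For the reverse direction the paper and your argument are essentially identical, though you make explicit the final step $b_e\neq b_f$ (ruling out that the swap leaves the slope unchanged), which the paper leaves implicit; this is a small but real improvement in rigor. The trade-off is self-containment (yours) versus brevity by citation (the paper's); both are valid, and your version could stand on its own without the companion reference.
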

\begin{proof}
    If $\lambda_i$ is a breakpoint of $w$, the claim follows from Lemma 3.6 in \cite{hausbrandt2024parametric}.
    For $\lambda \in (\lambda_{i-1} ,\lambda_i]$, if $r_\lambda(e) = f$, we can conclude that $e\in B_\lambda^*$ and $f\notin B_\lambda^*$.
    For $\lambda \in (\lambda_i ,\lambda_{i+1}]$ with $r_\lambda(f) = e$, it follows that $f\in B_\lambda^*$ and $e\notin B_\lambda^*$.
    This is only possible if $\lambda_i$ is a breakpoint of $w$.
\end{proof}
For a fixed $\lambda\in I$ and a subset $F\subseteq E$, the objective function value $y_F(\lambda)=w(B_\lambda^F,\lambda)$ can of course be computed with the greedy algorithm on $\M_F$.
However, this is also possible by successively deleting and replacing the elements of $F$ from $B_\lambda^*$ in an arbitrary order, which proves helpful in the next section.
\begin{observation}\label{obs: deleting F in arbitrary order}
    Let $F\subseteq E$ and $\lambda\in I$.
    We obtain the optimal basis $B_\lambda^F$ by iteratively deleting and replacing the elements of $F$ in an arbitrary order from $B_\lambda^*$.
\end{observation}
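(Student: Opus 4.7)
The plan is to proceed by induction on $\ell\coloneqq\vert F\vert$, using \cref{lem: replacement element} as the single engine at each step. The base case $\ell=1$ is precisely \cref{lem: replacement element}. For the inductive step, I fix any ordering $e_1,\ldots,e_\ell$ of $F$ and set $F_i\coloneqq\Set{e_1,\ldots,e_i}$. By the inductive hypothesis, the first $\ell-1$ delete-and-replace operations, applied to the matroid $\M$, produce the unique optimal basis $B_\lambda^{F_{\ell-1}}$ of $\M_{F_{\ell-1}}$. It therefore suffices to show that the last step on $e_\ell$, executed on $B_\lambda^{F_{\ell-1}}$ inside the matroid $\M_{F_{\ell-1}}$, yields $B_\lambda^F$.

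For this last step, observe that $\M_F=(\M_{F_{\ell-1}})_{e_\ell}$, so \cref{lem: replacement element} applies to the matroid $\M_{F_{\ell-1}}$ with optimal basis $B_\lambda^{F_{\ell-1}}$ and deleted element $e_\ell$ (the lemma is a general matroid statement, merely phrased for $\M$). If $e_\ell\in B_\lambda^{F_{\ell-1}}$, the lemma directly gives $B_\lambda^F=B_\lambda^{F_{\ell-1}}-e_\ell+r$, where $r$ is the cheapest replacement candidate computed inside $\M_{F_{\ell-1}}$; this is exactly the outcome of the iterative step. If $e_\ell\notin B_\lambda^{F_{\ell-1}}$, then $B_\lambda^{F_{\ell-1}}$ remains an independent set of rank $k$ in $\M_F$ (rank $k$ is preserved by \cref{ass: b}), and because every basis of $\M_F$ is also a basis of $\M_{F_{\ell-1}}$, minimality in the larger matroid transfers to minimality in $\M_F$; \cref{ass: c} then identifies $B_\lambda^{F_{\ell-1}}$ with $B_\lambda^F$, matching the iterative step that leaves the basis unchanged.

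Since both sub-cases end at the unique optimal basis $B_\lambda^F$, the \emph{arbitrary order} part is automatic: \cref{ass: c} guarantees the same terminal object independently of the chosen order.

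The main subtlety I expect to have to argue carefully is that \cref{lem: replacement element} must be applied \emph{inside} the shrinking matroid $\M_{F_{i-1}}$ at iteration $i$, rather than inside the original matroid $\M$. That is, the replacement element at step $i$ has to be computed relative to the current basis $B_\lambda^{F_{i-1}}$ and the current ground set $E\setminus F_{i-1}$, not relative to the original $B_\lambda^*$; otherwise the argument collapses whenever a previously chosen replacement element coincides with one of the elements to be interdicted later.
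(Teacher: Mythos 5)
Your proof is correct and follows essentially the same route as the paper's: induction on $\lvert F\rvert$ driven entirely by \cref{lem: replacement element}, re-applied inside the successively contracted matroid. The only cosmetic difference is the direction of the peel — the paper strips off the first element and invokes the inductive hypothesis on the remaining $\ell-1$ in $\M_g$, while you invoke the hypothesis on the first $\ell-1$ and handle the final element separately (also replacing the paper's ``WLOG $F\subseteq B_\lambda^*$'' reduction with an explicit case split on whether $e_\ell$ lies in the current basis).
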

\begin{proof}
    We show the claim by induction on $\ell=\vert F\vert$.
    We can without loss of generality assume that $F\subseteq B_\lambda^*$.
    Otherwise, we can initially delete the elements from $F\setminus B_\lambda^*$, as they are not contained in the basis $B_\lambda^F$.
    For $\ell=1$, the claim follows from \cref{lem: replacement element}.
    Consider $\ell>1$.
    We remove some element $g\in F$ from $B_\lambda^*$, replace it by $r_\lambda(g)$ and obtain the optimal basis $B_\lambda^g=B_\lambda^*-g+r_\lambda(g)$ of $\M'\coloneqq \M_g$.
    By induction, iteratively deleting and replacing the elements of $F-g$ from $B_\lambda^g$ yields an optimal basis on $\M'_{F-g}=\M_F$.
\end{proof}

\section{Structural results}\label{sec: structure}
Obviously, the number of changepoints corresponds to the number of optimal solutions and, thus, determines the running time of any exact algorithm.
In this section, we derive several properties of an optimal interdiction strategy $F^*$ that bound the number of changepoints of the optimal interdiction value function $y$.
These properties imply three different algorithms in \cref{sec: algorithms} the running times of which are determined by the different bounds on the number of changepoints obtained in this section.
We extend two existing concepts for non-parametric graphical matroids to arbitrary matroids with parametric weights.
A first bound follows directly from \cref{obs: y is upper env continuous} and the theory of Davenport–Schinzel Sequences, cf.\ \cite{agarwal1995davenport}.
\begin{theorem}\label{theo: l int upper bound 1}
    The optimal interdiction value function $y$ of the parametric matroid $\ell$-interdiction problem has at most $\mathcal{O}(\m^{\ell+1} k^{\frac{1}{3}}\alpha(m))$ many changepoints.
\end{theorem}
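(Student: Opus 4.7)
The plan is to express $y$ as the upper envelope of the $\binom{m}{\ell}$ functions $y_F$ and then invoke the classical Davenport--Schinzel bound on the upper envelope of line segments. By \cref{obs: y is upper env continuous} together with \cref{def: opt l-interdiction value function}, the function $y$ is the pointwise maximum of the family $\{y_F : F\subseteq E,\ |F|=\ell\}$, which contains $\binom{m}{\ell} = \mathcal{O}(m^\ell)$ piecewise linear continuous functions.

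Next, I would bound the complexity of each $y_F$ individually. The function $y_F$ is the optimal value function of the parametric matroid problem on $\M_F$, which by Assumption~\ref{ass: b} has rank $k$ and at most $m$ elements. Applying the tight $\Theta(mk^{1/3})$ bound of \cite{dey1998improved,eppstein1995geometric} shows that $y_F$ consists of $\mathcal{O}(mk^{1/3})$ maximal linear pieces. Summing over the $\mathcal{O}(m^\ell)$ choices of $F$ therefore produces a collection of $N = \mathcal{O}(m^{\ell+1} k^{1/3})$ line segments whose upper envelope is exactly $y$.

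To finish, I would invoke the classical bound that the combinatorial complexity of the upper envelope of $N$ line segments is $\mathcal{O}(\lambda_3(N)) = \mathcal{O}(N\alpha(N))$, where $\lambda_3$ is the order-$3$ Davenport--Schinzel sequence function, cf.\ \cite{agarwal1995davenport}. Since $\ell$ is constant by \cref{ass: l-interdiction} and $k \le m$, we have $N = \mathcal{O}(m^{\ell+1})$; using that $\alpha$ is (essentially) the inverse of Ackermann's function we obtain $\alpha(N) = \mathcal{O}(\alpha(m))$, and the claimed bound of $\mathcal{O}(m^{\ell+1} k^{1/3}\alpha(m))$ on the number of changepoints of $y$ follows, because by \cref{def: l-change-break-interdiction-points} every changepoint corresponds to a vertex of this envelope.

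The main obstacle here is not conceptual but careful bookkeeping: making sure that the Davenport--Schinzel machinery is applied to line segments (which is why order $3$ and an $\alpha$ factor appear) rather than to full lines, and that the $\alpha$ factor indeed collapses to $\alpha(m)$ once the constancy of $\ell$ is used. Everything else is routine.
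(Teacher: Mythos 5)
Your proposal is correct and takes essentially the same route as the paper: express $y$ as the upper envelope of the $\binom{m}{\ell}$ piecewise linear functions $y_F$, use the $\mathcal{O}(mk^{1/3})$ breakpoint bound per function, apply the Davenport--Schinzel envelope bound, and then argue that the resulting $\alpha$-factor collapses to $\alpha(m)$. Two small points worth flagging: your line ``$k\le m$, we have $N=\mathcal{O}(m^{\ell+1})$'' should read $N=\mathcal{O}(m^{\ell+4/3})$ (or simply $N=m^{\mathcal{O}(1)}$), which is all you need; and the step $\alpha(N)=\mathcal{O}(\alpha(m))$ for $N$ polynomial in $m$ is asserted as folklore in your writeup, whereas the paper derives it explicitly from the definition of Ackermann's function by showing $A_{p'+1}(1)\ge m^\ell$ for $p'=\alpha(m)$ --- a short but non-vacuous computation that a fully self-contained proof should include.
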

\begin{proof}
    It follows from \cite{dey1998improved} that each of the $\binom{\m}{\ell}$ many piecewise linear and continuous functions $y_F$ has at most $\mathcal{O}(\m k^{\frac{1}{3}})$ many breakpoints such that we obtain a total of $\mathcal{O}(\m^{\ell+1} k^{\frac{1}{3}})$ many linear pieces in the graphs of these functions.
    Corollary~2.18 from \cite{agarwal1995davenport} implies that their upper envelope $y$ has at most $\mathcal{O}(\m^{\ell+1} k^{\frac{1}{3}}\alpha(\m^\ell))$ many changepoints.
    Finally, we show that $\alpha(m^\ell)\in\O{\alpha(m)}$ using Ackermann's function, cf.~\cite{cormen2022introduction} which is defined as
    \begin{displaymath}
        A_p(j)=
        \begin{cases}
            j+1,  & p=0 \\
            A_{p-1}^{(j+1)}(j), & p\geq 1.
        \end{cases}
    \end{displaymath}
    Let $p'=\alpha(m)=\min\Set{p\colon A_p(1)\geq m}$.
    We show that $A_{p'+1}(1)\geq m^\ell$, since it then follows that $\alpha(m^\ell)=\min\Set{p\colon A_p(1)\geq m^\ell} \leq p'+1\in\O{\alpha(m)}$.
    
    It holds that $A_{p'+1}(1)=A_{p'}^{(2)}(1)>(A_{p'-1}^{(2)}(1))^\ell=(A_{p'}(1))^\ell\geq m^\ell$, where the inequality follows from
    \begin{equation*}
    	\begin{split}
    		& A_p^{(2)}(1) =A_p(A_p(1))=A_p(A_{p-1}^{(2)}(1))=A_p(A_{p-1}(A_{p-1}(1))) \\
    		& > (A_{p-1}(A_{p-1}(1)))^\ell =  (A_{p-1}^{(2)}(1))^\ell.
    	\end{split}
    \end{equation*}
\end{proof}
Next, we do not want to consider all $\binom{\m}{\ell}$ possible subsets $F\subseteq E$ of $\ell$ elements.
Instead, we restrict the candidates to so-called non-dominated subsets.
\begin{definition}
    Let $J\subseteq I$ and $F,F'\subseteq E$. The set $F$ \emph{dominates} the set $F'$ on $J$ if $y_F(\lambda) \geq y_{F'}(\lambda)$ for all $\lambda\in J$.
    A set that is not dominated on $J$ is called \emph{non-dominated} on $J$.
\end{definition}
For the remainder of this section, let $J=(\lambda_i,\lambda_{i+1})\subseteq I$, where $\lambda_i$ and $\lambda_{i+1}$ are two consecutive equality points.
Note that the optimal bases $B_\lambda^*$ and $B_\lambda^F$ as well as all replacement candidates and elements remain unchanged on $J$.
This means that if $B=B_\lambda^*$, $B'=B_\lambda^F$, $R=R_\lambda(e)$ or $r=r_\lambda(e)$ for one $\lambda\in J$, then $B=B_\lambda^*$, $B'=B_\lambda^F$, $R=R_\lambda(e)$, and $r=r_\lambda(e)$ for all $\lambda\in J$.
\begin{lemma}\label{lem: at least one element from B^*}
    Let $\lambda \in J$ and $F\subseteq E$. If $F\cap B_\lambda^*= \emptyset$, then $F$ is dominated on $J$. 
\end{lemma}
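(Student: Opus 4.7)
The plan is to show that, on the interval $J=(\lambda_i,\lambda_{i+1})$, a set $F$ disjoint from the (locally constant) minimum weight basis $B^*\coloneqq B_\lambda^*$ produces exactly the unperturbed value $y_F(\lambda)=w(B^*,\lambda)$, and that any set $F'$ of size $\ell$ that meets $B^*$ matches or beats this value. Throughout the proof I can and would use that $B^*$, the replacement candidates, and the replacement elements are all constant on $J$ (as noted right before the lemma), so it suffices to argue pointwise for a single $\lambda\in J$.

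First, I would sandwich $y_F(\lambda)$ between two copies of $w(B^*,\lambda)$. For the upper bound, I use that $F\cap B^*=\emptyset$ implies $B^*\subseteq E\setminus F$; since $B^*$ is independent in $\M$ and has cardinality $k$, it is a basis of $\M_F$, which by minimality of $B_\lambda^F$ gives $y_F(\lambda)=w(B_\lambda^F,\lambda)\leq w(B^*,\lambda)$. For the lower bound, I exploit that $B_\lambda^F$ is an independent set of size $k$ in $\M$ by \cref{ass: l-interdiction}\,(b), hence a basis of $\M$, so $w(B_\lambda^F,\lambda)\geq w(B_\lambda^*,\lambda)=w(B^*,\lambda)$ by optimality of $B^*$ in $\M$. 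Combining the two yields $y_F(\lambda)=w(B^*,\lambda)$ for every $\lambda\in J$.

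Next I would explicitly construct a set $F'$ that dominates $F$ on $J$. Pick any $f\in F$ and any $e\in B^*$ (both are possible since $\ell\geq 1$ and $k\geq 1$), and let $F'\coloneqq (F-f)+e$. Because $F\cap B^*=\emptyset$, we have $e\notin F$, so $|F'|=\ell$ and $F'\cap B^*\neq\emptyset$. The same lower-bound argument as above applied to $F'$ gives $y_{F'}(\lambda)\geq w(B^*,\lambda)=y_F(\lambda)$ for all $\lambda\in J$, so $F'$ dominates $F$ on $J$.

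There is no real obstacle here: the lemma expresses the intuitive fact that interdicting only non-basis elements has no effect, so any strategy that actually hits the basis is at least as good. The only thing I would double check is that the argument is formulated in a way that also handles trivial edge cases (e.g.\ $m-k<\ell$, in which case no $F$ with $F\cap B^*=\emptyset$ exists and the claim is vacuous), and that I correctly invoke \cref{ass: l-interdiction}\,(b) to guarantee the existence of $B_\lambda^F$ of rank $k$ needed in the lower-bound step.
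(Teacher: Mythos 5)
Your proof is correct and takes essentially the same route as the paper's: both first argue that $y_F(\lambda)=w(B_\lambda^*,\lambda)$ because $F$ misses the optimal basis, and both then note that any interdiction set meeting $B_\lambda^*$ yields a value at least $w(B_\lambda^*,\lambda)$ since every interdicted basis is still a rank-$k$ basis of $\M$. The paper leaves the existence of the dominating set $F'$ implicit, whereas you construct one explicitly (swapping some $f\in F$ for some $e\in B_\lambda^*$), which is a small but genuine tightening of the argument.
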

\begin{proof}
    If $F\cap B_\lambda^* = \emptyset$, it holds that $B^F = B_\lambda^*$ and $w(B^F,\lambda)= w(B_\lambda^*,\lambda)$ for all $\lambda \in J$.
    For every $F'\subseteq E$ with $F'\cap B_\lambda^* \neq \emptyset$, we get $B^{F'}\neq B_\lambda^*$ and $y_{F'}(\lambda)= w(B^{F'},\lambda) \geq w(B_\lambda^*,\lambda)=w(B^F,\lambda)=y_F(\lambda)$.
\end{proof}
\cref{lem: at least one element from B^*} shows that all subsets containing $\ell$ elements of the $m-k$ many non-basis elements $e\in E\setminus B_\lambda^*$ are redundant.
Hence, on $J$ there are at most $\binom{m}{\ell} -\binom{m-k}{\ell} $ many non-dominated subsets.
We further tighten the upper bound by generalizing the concept of sparse, weighted $\ell$-connected certificates of \cite{liang1997finding} from graphical matroids with fixed edge weights to arbitrary matroids with parametric weights.
To do this, we delete the optimal basis $B_\lambda^*$, then compute the next best basis, delete this again, and continue this procedure until we obtain $\ell$ bases.
\begin{definition}\label{def: bases Bi of Ui}
    For $\lambda\in J$, let $B_\lambda^0 \coloneqq B_\lambda^*$ and for $i>0$, let $B_\lambda^i$ be an optimal basis of $\M_{U_\lambda^{i-1}}$, where $U_\lambda^{i-1}=\bigcup_{j=0}^{i-1} B_\lambda^j$.
\end{definition}
Note that each basis $B_\lambda^{i}$ and, hence, each set $U_\lambda^{i}$ remains unchanged on $J$ as well.
When computing a set $F^*$ for some $\lambda\in J$, it suffices to focus on the set $U_\lambda^{\ell-1}$.
\begin{lemma}\label{lem: F* subset U l+1}
    Let $\lambda\in J$. If $e\in E\setminus F$ is not an element of $U_\lambda^\ell$, then $e$ is not an element of an optimal basis $B_\lambda^F$ of $\M_F$ on $J$ for any $F\subseteq E$ with $\vert F\vert\leq\ell$.
\end{lemma}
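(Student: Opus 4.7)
My plan is to argue by contradiction. Suppose some $F\subseteq E$ with $\lvert F\rvert\le\ell$ has $e\in B_\lambda^F$ while $e\notin U_\lambda^\ell$. The starting observation is that the bases $B_\lambda^0,B_\lambda^1,\ldots,B_\lambda^\ell$ are pairwise disjoint: by \cref{def: bases Bi of Ui}, $B_\lambda^i$ is a basis of $\M_{U_\lambda^{i-1}}$, so $B_\lambda^i\subseteq E\setminus U_\lambda^{i-1}$, which prevents it from sharing elements with any $B_\lambda^j$ for $j<i$. Since we have $\ell+1$ such pairwise disjoint bases and only $\lvert F\rvert\le\ell$ elements in $F$, a pigeonhole argument produces an index $i^\ast$ with $F\cap B_\lambda^{i^\ast}=\emptyset$. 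Then $B_\lambda^{i^\ast}$ is itself a basis of $\M_F$ that, by hypothesis $e\notin U_\lambda^\ell$, does not contain $e$.

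The next step is to apply the strong basis exchange property to the two bases $B_\lambda^F$ and $B_\lambda^{i^\ast}$ of $\M_F$. Since $e\in B_\lambda^F\setminus B_\lambda^{i^\ast}$, this yields an element $g\in B_\lambda^{i^\ast}\setminus B_\lambda^F$ such that both $B_\lambda^F-e+g$ and $B_\lambda^{i^\ast}-g+e$ are bases of $\M_F$. The whole argument then rests on pinching the weights of $e$ and $g$ from both sides. Optimality of $B_\lambda^F$ as a minimum weight basis of $\M_F$ immediately gives $w(e,\lambda)\le w(g,\lambda)$. For the reverse direction, I would observe that $B_\lambda^{i^\ast}-g+e\subseteq E\setminus U_\lambda^{i^\ast-1}$, because $B_\lambda^{i^\ast}$ avoids $U_\lambda^{i^\ast-1}$ by construction and $e\notin U_\lambda^\ell\supseteq U_\lambda^{i^\ast-1}$. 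Consequently this set is a basis of $\M_{U_\lambda^{i^\ast-1}}$, and optimality of $B_\lambda^{i^\ast}$ in that matroid forces $w(g,\lambda)\le w(e,\lambda)$.

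Combining the two bounds gives $w(e,\lambda)=w(g,\lambda)$. Because $\lambda$ lies in the open interval $J$ strictly between two consecutive equality points, and because no two weight functions coincide identically on $I$ by \cref{ass: l-interdiction}, such an equality is impossible, yielding the desired contradiction.

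The main obstacle I anticipate is ensuring that the pigeonholed $B_\lambda^{i^\ast}$ is genuinely a basis of $\M_F$ of rank $k$, so that the strong exchange is applicable; this should follow from \cref{ass: l-interdiction}(\ref{ass: b}) in the non-vacuous regime in which an element $e$ outside $U_\lambda^\ell$ exists at all. A secondary subtlety is that the second weight inequality requires \emph{both} $e$ and $g$ to be admissible elements of $\M_{U_\lambda^{i^\ast-1}}$; this is precisely what $e\notin U_\lambda^\ell$ (together with $g\in B_\lambda^{i^\ast}$) guarantees, and it explains why the index $\ell$, rather than $\ell-1$, appears in the statement.
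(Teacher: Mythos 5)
Your route is genuinely different from the paper's: you use a pigeonhole over the $\ell+1$ pairwise disjoint bases $B_\lambda^0,\dotsc,B_\lambda^\ell$ to find one disjoint from $F$, and then pinch $w(e,\lambda)$ against $w(g,\lambda)$ from both sides via the strong basis exchange axiom. The paper instead fixes an arbitrary $i\in\{0,\dotsc,\ell-1\}$, forms the circuit $C$ in $B_\lambda^i+e$, and runs a chain of exchanges to show that $C-e$ could be swapped entirely into $B_\lambda^F$, forcing the circuit $C$ into a basis. Your weight-pinching idea is appealingly short, but it rests on a step that fails.

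The gap is exactly where you suspected: the pigeonholed $B_\lambda^{i^\ast}$ need not be a basis of $\M_F$, because it may have size strictly smaller than $k$. The strong exchange axiom applies only between two bases of the same matroid, so without $\lvert B_\lambda^{i^\ast}\rvert=k$ the element $g$ does not exist, and the inequality $w(g,\lambda)\le w(e,\lambda)$ has no basis set $B_\lambda^{i^\ast}-g+e$ to compare against. Assumption~\ref{ass: l-interdiction}(\ref{ass: b}) does not rescue this: it guarantees that $\M_F$ has rank $k$ whenever $\lvert F\rvert=\ell$, but $U_\lambda^{i^\ast-1}$ can contain up to $i^\ast k$ elements, far more than $\ell$. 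A concrete witness: take the cycle matroid of the multigraph on vertices $\{a,b,c\}$ with two parallel edges between each pair (rank $k=2$, six edges, $3$-edge-connected, so Assumption~\ref{ass: l-interdiction}(\ref{ass: b}) holds for $\ell=2$), with weights ordered $ab_1<bc_1<ab_2<bc_2<ca_1<ca_2$. Then $B_\lambda^0=\{ab_1,bc_1\}$, $B_\lambda^1=\{ab_2,bc_2\}$, $B_\lambda^2=\{ca_1\}$, so $e=ca_2\notin U_\lambda^\ell$ and $\lvert B_\lambda^2\rvert=1<k$. For $F=\{ab_1,ab_2\}$ the pigeonhole forces $i^\ast=2$, and strong exchange cannot be invoked, even though the lemma's conclusion ($ca_2\notin B_\lambda^F=\{bc_1,ca_1\}$) is of course true. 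To close the gap you could keep the pigeonhole but argue via the circuit in $B_\lambda^{i^\ast}+e$ (as the paper does): since $e$ is heaviest in that circuit and the circuit lies in $B_\lambda^{i^\ast}+e\subseteq (E\setminus F)$, some $g$ in the circuit with $g\notin\operatorname{span}(B_\lambda^F-e)$ yields a cheaper basis $B_\lambda^F-e+g$ of $\M_F$; this avoids any appeal to $B_\lambda^{i^\ast}$ having full rank.
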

\begin{proof}
    Let $\lambda\in J$ and $F\subseteq E$ with $\vert F\vert \leq\ell$. Let $e \in E\setminus F$ with $e \notin U_\lambda^\ell$ and suppose that $e$ is in the optimal basis $B_\lambda^F$ of $\M_F$ on $J$. 
    Let $i \in \Set{0,\dotsc,\ell-1}$ be arbitrary.
    If we add $e$ to the basis $B_\lambda^i$, we get a circuit $C$ and $e$ is the heaviest element of $C$.
    If we remove $e$ from the basis $B_\lambda^F$, there is an $f\in B_\lambda^i\setminus B_\lambda^F$ such that $B_\lambda^F - e + f$ is again a basis.
    We get a contradiction if $f \in C-e$.
    Let $C-e =\Set{f_1,\dotsc,f_p} \subseteq E$.
    
    Suppose that $B_\lambda^F - e + f_j$ contains a circuit $C_j$ for all $j=1,\dotsc,p$ with $f_j\notin B_\lambda^F$. 
    Then, for each such $j$, there is an element $g_j \in C_j - f_j$ such that $B_\lambda^F - g_j + f_j$ is a basis.
    Furthermore, we can choose $g_j$ such that $g_j \notin C-e$.
    Otherwise, $f_j$ and all $g_j\in C_j-f_j$ are in $C-e$ and, thus, $C_j \subseteq C-e$, which is a contradiction, as $C-e$ is independent.

    We can therefore successively swap all elements $f_j \notin B_\lambda^F$ into the basis $B_\lambda^F$ by replacing them with $g_j$.
    Note that all elements $g_j$ and $f_j$ are pairwise different as $f_j \in C-e$ and $g_j \notin C-e$.
    Hence, after a maximum of $p$ swaps, the whole set $C-e$ is swapped into the basis $B_\lambda^F$ which is a contradiction, since $e\in B_\lambda^F$.
\end{proof}
\begin{remark}\label{rem: at most kl choose l subsets}
    \cref{lem: F* subset U l+1} says that all sets $F\subseteq E$ with $F \nsubseteq U_\lambda^{\ell-1}$ are dominated on $J$.
    Thus, there are at most $\binom{k\ell}{\ell}$ many non-dominated sets $F\subseteq E$ with $\vert F \vert =\ell$ on $J$ as each of the $\ell$ bases $B_\lambda^i$ remains unchanged on $J$ and contains $k$ elements.
\end{remark}
\begin{lemma}\label{lem: l-1 elements chosen}
    Let $\lambda \in J$ and $F'\subseteq E$ with $\vert F'\vert =\ell-1$. At $\lambda$, the best interdiction strategy $F\subseteq E$ with $\vert F \vert =\ell$ and $F'\subseteq F$ is given by $F'+e^*$ where $e^*$ is the most vital element of $\M_{F'}$ at $\lambda$.
\end{lemma}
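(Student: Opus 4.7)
The plan is to observe that this lemma is essentially a definitional unpacking: the problem of optimally extending the fixed set $F'$ by one more element is itself a $1$-interdiction problem, just posed on the matroid $\M_{F'}$ rather than on $\M$.

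First I would note that every set $F\subseteq E$ with $\vert F\vert=\ell$ and $F'\subseteq F$ has the form $F=F'+e$ for some $e\in E\setminus F'$, and that $E\setminus F'$ is precisely the ground set of $\M_{F'}$. The identity $\M_{F'+e}=(\M_{F'})_e$ then gives $B_\lambda^F = B_\lambda^{F'+e} = B_\lambda^e(\M_{F'})$, where the right-hand side denotes the minimum weight basis in $\M_{F'}$ after additionally deleting $e$. Taking weights yields
\begin{equation*}
    y_F(\lambda) \;=\; w(B_\lambda^{F'+e},\lambda) \;=\; w\bigl(B_\lambda^e(\M_{F'}),\lambda\bigr).
\end{equation*}
Maximizing $y_F(\lambda)$ over all admissible $F$ is therefore equivalent to maximizing $w(B_\lambda^e(\M_{F'}),\lambda)$ over $e\in E\setminus F'$.

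By the definition of a most vital element applied to $\M_{F'}$, the element $e^*$ is exactly the maximizer of this latter expression. Hence $F=F'+e^*$ achieves the maximum, which is the claim. The only point to verify is that Assumption~\ref{ass: l-interdiction}\ref{ass: b} lets us still speak of bases of $\M_{F'+e^*}$ and that the most vital element of $\M_{F'}$ is well defined in the interior of $J$; both follow from our standing assumptions together with the fact, noted just before \cref{lem: at least one element from B^*}, that $B_\lambda^*$ and $B_\lambda^F$ do not change on $J$.

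There is no real obstacle: the argument is essentially the observation that the outer interdiction of the last element decouples from the inner interdiction of $F'$, so the lemma reduces to the one-interdiction problem on $\M_{F'}$. The main care needed is to keep the notational conventions straight, in particular the distinction between $B_\lambda^e$ taken in $\M$ and $B_\lambda^e(\M_{F'})$ taken in the contracted matroid.
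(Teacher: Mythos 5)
Your proof is correct, and it takes a genuinely more direct route than the paper. The paper argues by contradiction: it supposes a strictly better extension $\hat{F}=F'+e$ exists, invokes \cref{lem: for mve only elements of B*} to place $e^*$ (and WLOG $e$) in $B_\lambda^{F'}$, then applies \cref{obs: deleting F in arbitrary order} to write $B_\lambda^{F'+e^*}=B_\lambda^{F'}-e^*+r_\lambda(e^*)$ and $B_\lambda^{F'+e}=B_\lambda^{F'}-e+r_\lambda(e)$ with replacement elements taken in $B_\lambda^{F'}$, and finally compares weights to contradict the choice of $e^*$. You instead observe the matroid identity $\M_{F'+e}=(\M_{F'})_e$, which makes $y_{F'+e}(\lambda)$ literally the one-interdiction objective of $e$ in $\M_{F'}$; optimizing over $e\in E\setminus F'$ is then the verbatim definition of a most vital element in $\M_{F'}$. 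This bypasses the replacement-element machinery entirely and turns the lemma into a definitional restatement, which is arguably the cleaner view. The trade-off is that the paper's explicit computation through $r_\lambda(\cdot)$ mirrors how \cref{alg: compute F_i} actually locates $e^*$ in practice, so that form of the argument is perhaps more useful as preparation for the algorithmic section, but as a proof of the lemma your version is both correct and shorter.
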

\begin{proof}
    Let $\lambda \in J$ and $e^*$ be the most vital element of $\M_{F'}$ at $\lambda$.
    Let $F\coloneqq F'+e^*$.
    Suppose there exists a set $\hat{F}\subseteq E$ with $\vert\hat{F}\vert=\ell$ and $F'\subseteq \hat{F}$ such that $\hat{F}\setminus F'=\Set{e}\neq\Set{e^*}$ and $y_{\hat{F}}(\lambda) > y_F(\lambda)$.
    By \cref{lem: for mve only elements of B*}, we know that $e^*$ is an element of the optimal basis $B_\lambda^{F'}$ of $\M_{F'}$.
    Without loss of generality, we can assume that $e\in B_\lambda^{F'}$.
    By \cref{obs: deleting F in arbitrary order}, we obtain $B_\lambda^F=B_\lambda^{F'+ e^*} = B_\lambda^{F'}-e^*+r_\lambda(e^*)$ and $B_\lambda^{\hat{F}}=B_\lambda^{F'+ e} = B_\lambda^{F'}-e+r_\lambda(e)$, where the replacement elements $r_\lambda(e^*)$ and $r_\lambda(e)$ are with respect to the basis $B_\lambda^{F'}$.
    It holds that 
\begin{equation*}
    \begin{split}
        & w(B_\lambda^{F'},\lambda)-w(e,\lambda)+w(r_\lambda(e),\lambda) = w(B_\lambda^{\hat{F}},\lambda) \\
    & > w(B_\lambda^F,\lambda) = w(B_\lambda^{F'},\lambda)-w(e^*,\lambda)+w(r_\lambda(e^*),\lambda)
    \end{split}
\end{equation*} and, therefore, $-w(e,\lambda)+w(r_\lambda(e) > -w(e^*,\lambda)+w(r_\lambda(e^*),\lambda)$, which is a contradiction to the fact that $e^*$ is the most vital element of $\M_{F'}$ at $\lambda$.
\end{proof}
\cref{lem: F* subset U l+1} and \cref{lem: l-1 elements chosen} imply the following tighter bound on the number of changepoints of $y$.
\begin{corollary}\label{cor: bound using U}
    The optimal interdiction value function $y$ of the parametric matroid $\ell$-interdiction problem has at most $\binom{m}{2}\binom{k(\ell-1)}{\ell-1}k \in \O{m^2k^\ell\ell^{\ell-1}}$ many changepoints.
\end{corollary}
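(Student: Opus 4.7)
The plan is to exploit the fact that between two consecutive equality points of the parametric weights the combinatorial structure of every interdicted matroid is frozen, reducing the problem locally to an upper envelope of affine functions. First I would fix an open interval $J=(\lambda_i,\lambda_{i+1})$ between two consecutive equality points of the family $\{w(e,\lambda)\}_{e\in E}$. On $J$, the ranking of the weights is constant, so the greedy algorithm returns the same basis throughout $J$ for every submatroid $\M_F$. Consequently $B_\lambda^F$ is constant on $J$ and $y_F$ is an affine function of $\lambda$ on $J$ for each $F\subseteq E$ with $|F|=\ell$. Hence $y|_J$ is the upper envelope of finitely many lines, and its number of linear pieces (and thus the number of changepoints inside $J$) is bounded by the number of non-dominated sets $F$ on $J$.

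To bound this number, I would combine \cref{lem: for mve only elements of B*}, \cref{lem: F* subset U l+1}, and \cref{lem: l-1 elements chosen}. By \cref{lem: l-1 elements chosen}, every non-dominated $F$ can be written as $F=F'+e^*$, where $|F'|=\ell-1$ and $e^*=e^*(F')$ is the most vital element of $\M_{F'}$ at $\lambda$; by \cref{lem: for mve only elements of B*} applied to $\M_{F'}$, this element lies in the basis $B_\lambda^{F'}$, which has cardinality $k$. To control the choice of $F'$, I would apply the argument of \cref{lem: F* subset U l+1} to the $(\ell-1)$-interdiction setting: the circuit-swapping proof only requires $|F'|\le \ell-1$ and the existence of the first $\ell-1$ bases $B_\lambda^0,\dots,B_\lambda^{\ell-2}$, and therefore shows that the relevant $F'$ satisfy $F'\subseteq U_\lambda^{\ell-2}$, a union of at most $\ell-1$ bases of size at most $k(\ell-1)$. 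This yields at most $\binom{k(\ell-1)}{\ell-1}$ candidates for $F'$ and at most $k$ candidates for $e^*\in B_\lambda^{F'}$, hence at most $\binom{k(\ell-1)}{\ell-1}\cdot k$ non-dominated sets $F$ on $J$.

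Since there are at most $\binom{m}{2}$ equality points of the weights and therefore at most $\binom{m}{2}+1$ intervals $J$, and each equality point contributes at most one additional changepoint of $y$, the total number of changepoints is at most $\binom{m}{2}\cdot\binom{k(\ell-1)}{\ell-1}\cdot k$ up to lower-order terms. The asymptotic simplification then follows from $\binom{k(\ell-1)}{\ell-1}\le (k(\ell-1))^{\ell-1}/(\ell-1)! \in \O{k^{\ell-1}\ell^{\ell-1}}$ together with $\binom{m}{2}\in\O{m^2}$, which yields the claimed bound of $\O{m^2 k^\ell \ell^{\ell-1}}$.

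The main technical point is the mild extension of \cref{lem: F* subset U l+1} from interdiction sets of size $\ell$ to interdiction sets of size $\ell-1$; once this is in place, the rest of the argument is a routine combination of the stability of $B_\lambda^F$ between equality points with the standard fact that the upper envelope of $N$ lines has at most $N$ pieces.
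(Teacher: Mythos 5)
Your proposal matches the paper's own proof essentially step for step: you fix an interval $J$ between consecutive equality points, use \cref{lem: l-1 elements chosen} to reduce to choosing $F'$ of size $\ell-1$ plus a most vital element $e^*\in B_\lambda^{F'}$, invoke the $(\ell-1)$-version of \cref{lem: F* subset U l+1} (this is exactly what \cref{rem: at most kl choose l subsets} is applied for in the paper) to get $F'\subseteq U_\lambda^{\ell-2}$ and hence $\binom{k(\ell-1)}{\ell-1}$ candidates, bound $e^*$ by $k$ choices, and multiply by the $\binom{m}{2}$ intervals. The argument and the final accounting (up to lower-order terms, which the paper also absorbs) are the same, so this is a correct reproduction of the paper's approach.
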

\begin{proof}
    Consider a subinterval $J\subseteq I$ between two consecutive equality points.
    By \cref{rem: at most kl choose l subsets} and \cref{lem: l-1 elements chosen}, it suffices to consider $\binom{k(\ell-1)}{\ell-1}$ many subsets $F'$ of cardinality $\ell-1$ on $J$.
    Furthermore, by \cref{lem: at least one element from B^*} there are at most $k$ most vital elements of $\M_{F'}$ on $J$.
    This yields at most $k\binom{k(\ell-1)}{\ell-1}$ many linear functions to determine $y$ on $J$.
    Consequently, we obtain at most $k\binom{k(\ell-1)}{\ell-1}-1$ many interdiction points on $J$.
    Together with a potential breakpoint at the boundary of $J$, we obtain at most $\binom{m}{2}\binom{k(\ell-1)}{\ell-1}k$ many changepoints on the whole interval $I$.
\end{proof}
In \cref{lem: l-1 elements chosen}, we see that the choice of $\ell-1$ elements determines the missing $\ell$-th element.
We show in the next lemma that the choice of already one (or more) elements can restrict the candidates for the remaining elements to be interdicted.
The result can be seen as a generalization of the method of \cite{bazgan2011efficient} for identifying the $\ell$-most vital edges of a minimum spanning tree.
\begin{lemma}\label{lem: F* is union of Fi}
    If $F^*$ is a set of $\ell$-most vital elements for some $\lambda \in J$, there exists a partition $F^*=\dot{\bigcup}_{i=1}^t F_i$ for some $1\leq t\leq\ell$ such that $F_1=F^*\cap B_\lambda^* \neq \emptyset$ and, for $i>1$, it holds that $F_i = F^* \cap B_\lambda^{\dot{\bigcup}_{j=1}^{i-1} F_j} \neq \emptyset$.
\end{lemma}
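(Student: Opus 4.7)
The plan is to construct the partition inductively, verifying at each step that a new non-empty block is produced until $F^*$ is exhausted. Set $G_0 := \emptyset$, so that $B_\lambda^{G_0} = B_\lambda^*$, and for $i \geq 1$ define $F_i := F^* \cap B_\lambda^{G_{i-1}}$ and $G_i := G_{i-1} \cup F_i$. Each basis $B_\lambda^{G_{i-1}}$ has rank $k$: Assumption~\ref{ass: b} guarantees this when $|G_{i-1}| = \ell$, and for smaller $|G_{i-1}|$ one extends $G_{i-1}$ to a superset of size $\ell$ and observes that a basis of the restricted matroid is also a rank-$k$ basis of $\mathcal{M}_{G_{i-1}}$. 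Since $B_\lambda^{G_{i-1}}$ is disjoint from $G_{i-1}$, the newly produced $F_i$ is disjoint from $G_{i-1}$, and hence from $F_1, \dotsc, F_{i-1}$. So once we establish that each $F_i$ is non-empty whenever $G_{i-1} \subsetneq F^*$, the process terminates after some $t \leq \ell$ steps with $G_t = F^*$, yielding the desired partition.

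The main (and only nontrivial) obstacle is proving this non-emptiness. I would argue by contradiction: suppose $G_{i-1} \subsetneq F^*$ but $H := F^* \setminus G_{i-1}$ is disjoint from $B_\lambda^{G_{i-1}}$. By \cref{obs: deleting F in arbitrary order}, the optimal basis $B_\lambda^{F^*}$ can be assembled by first deleting and replacing the elements of $G_{i-1}$ from $B_\lambda^*$ (arriving at $B_\lambda^{G_{i-1}}$) and then processing the elements of $H$ in turn. Since no element of $H$ lies in the current basis $B_\lambda^{G_{i-1}}$, removing them does not alter the basis (exactly as in the initial reduction used in the proof of \cref{obs: deleting F in arbitrary order}). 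Consequently $B_\lambda^{F^*} = B_\lambda^{G_{i-1}}$ and $y_{F^*}(\lambda) = y_{G_{i-1}}(\lambda)$.

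From here the contradiction is obtained by exhibiting a strictly better interdiction set of size $\ell$. Pick any $e \in B_\lambda^{G_{i-1}}$ and any $H' \subseteq E \setminus (G_{i-1} + e)$ with $|H'| = \ell - |G_{i-1}| - 1$, and set $F' := G_{i-1} \cup \{e\} \cup H'$. Monotonicity of the interdiction value under set inclusion gives $y_{F'}(\lambda) \geq y_{G_{i-1} + e}(\lambda)$; by \cref{lem: replacement element} applied inside $\mathcal{M}_{G_{i-1}}$, the latter equals $y_{G_{i-1}}(\lambda) - w(e,\lambda) + w(r_\lambda(e),\lambda)$. Uniqueness of the optimal basis (Assumption~\ref{ass: c}) forces $w(r_\lambda(e),\lambda) > w(e,\lambda)$ strictly, since equality would produce a second optimal basis of $\mathcal{M}_{G_{i-1}}$ and reversal would give one of strictly smaller weight. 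Hence $y_{F'}(\lambda) > y_{G_{i-1}}(\lambda) = y_{F^*}(\lambda)$, contradicting that $F^*$ is a set of $\ell$-most vital elements at $\lambda$.
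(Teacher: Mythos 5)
Your proof is correct and follows essentially the same strategy as the paper's: build the partition greedily, and for non-emptiness argue by contradiction that if $F^*$ ran out of new basis intersections, $y_{F^*}(\lambda)$ would equal $y_{G_{i-1}}(\lambda)$, after which deleting one more element of $B_\lambda^{G_{i-1}}$ and invoking uniqueness of the optimal basis (Assumption~\ref{ass: c}) yields a strictly better interdiction set, contradicting optimality. Your write-up is a bit more explicit than the paper's in two places — you verify that each intermediate $B_\lambda^{G_{i-1}}$ has rank $k$, and you pad the improved set out to cardinality $\ell$ via monotonicity before invoking optimality — but these are refinements of the same argument, not a different route.
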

\begin{proof}
    By \cref{lem: at least one element from B^*}, we know that $F_1=F^* \cap B_\lambda^* \neq \emptyset$.
    If $\vert F_1 \vert =\ell$, we are done.
    Let $i>1$ and $F_p = F^* \cap B_\lambda^{\dot{\bigcup}_{j=1}^{p-1} F_j} \neq \emptyset$ for all $p=1,\dotsc,i$.
    We have to show that $F_{i+1} = F^* \cap B_\lambda^{\dot{\bigcup}_{j=1}^{i} F_j}\neq \emptyset$ if $\lvert {\dot{\bigcup}_{j=1}^{i} F_j}\rvert <\ell $. Otherwise, we are done.
    So let $\lvert {\dot{\bigcup}_{j=1}^{i} F_j}\rvert <\ell $ and suppose that $F_{i+1} = \emptyset$.

    If we now interdict the remaining $\ell-\lvert {\dot{\bigcup}_{j=1}^{i} F_j}\rvert$ elements from the set $F^*$, the basis $B_\lambda^{\dot{\bigcup}_{j=1}^{i} F_j}$ remains, so that $w(B_\lambda^{F^*},\lambda) \leq w(B_\lambda^{\dot{\bigcup}_{j=1}^{i} F_j},\lambda)$.
    Furthermore, since $\dot{\bigcup}_{j=1}^{i} F_j\subseteq F^*$, it follows that $w(B_\lambda^{\dot{\bigcup}_{j=1}^{i} F_j},\lambda) \leq w(B_\lambda^{F^*},\lambda)$ and, therefore, $y_{F^*}(\lambda)=w(B_\lambda^{F^*},\lambda) = w(B_\lambda^{\dot{\bigcup}_{j=1}^{i} F_j},\lambda)$.
    However, if we delete an element $e\in B_\lambda^{\dot{\bigcup}_{j=1}^{i} F_j}$, then it holds that $$y_{\left(\dot{\bigcup}_{j=1}^{i} F_j\right) + e}(\lambda) > y_{\dot{\bigcup}_{j=1}^{i} F_j}(\lambda) = y_{F^*}(\lambda)$$ due to uniqueness of $B_\lambda^{\dot{\bigcup}_{j=1}^{i} F_j}$, see Assumption \ref{ass: c}.
    This is a contradiction to the optimality of $F^*$ at $\lambda$ such that $F_{i+1}\neq\emptyset$ and the claim follows by induction.
    Note that the sets $F_i$ are pairwise disjoint by construction.
\end{proof}
\begin{corollary}\label{cor: at most k+l choose l subsets}
    There are at most $k\binom{k+\ell-2}{\ell-1}$ many non-dominated sets $F\subseteq E$ with $\vert F \vert =\ell$ on $J$.
\end{corollary}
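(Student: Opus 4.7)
The plan is to use the unique partition structure of \cref{lem: F* is union of Fi} to encode each non-dominated set $F^*\subseteq E$ with $\vert F^*\vert=\ell$ by its partition $F^*=F_1\dot{\cup}\cdots\dot{\cup}F_t$ and then bound the number of admissible partitions. Writing $V_i\coloneqq B_\lambda^{F_1\cup\cdots\cup F_{i-1}}$ (so $V_1=B_\lambda^*$ and $F_i\subseteq V_i$), I first sharpen the containment to $F_i\subseteq V_i\setminus V_{i-1}$ for $i\geq 2$. This holds because the parts are pairwise disjoint (by construction) and $F^*\cap V_{i-1}=F_{i-1}$, so any element of $F_i$ contained in $V_{i-1}$ would already have to belong to $F_{i-1}$.

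Next I determine $\vert V_i\setminus V_{i-1}\vert$. By \cref{obs: deleting F in arbitrary order}, equivalently by iterating \cref{lem: replacement element}, the basis $V_i$ arises from $V_{i-1}$ by removing $F_{i-1}$ and inserting exactly $\vert F_{i-1}\vert$ distinct replacement elements, each lying outside $V_{i-1}$. Hence $\vert V_i\setminus V_{i-1}\vert=\vert F_{i-1}\vert$. Setting $a_i\coloneqq\vert F_i\vert$, the composition $(a_1,\ldots,a_t)$ satisfies $a_1\leq k$, $a_i\leq a_{i-1}$, $a_i\geq 1$, and $\sum_{i=1}^t a_i=\ell$, and the number of admissible tuples $(F_1,\ldots,F_t)$ with these sizes is bounded by
\[
\binom{k}{a_1}\prod_{i=2}^{t}\binom{a_{i-1}}{a_i},
\]
where $\binom{k}{a_1}$ counts choices of $F_1\subseteq B_\lambda^*$ and $\binom{a_{i-1}}{a_i}$ counts subsets of size $a_i$ inside the $a_{i-1}$-set $V_i\setminus V_{i-1}$.

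To evaluate the total sum over all valid compositions I use a standard stars-and-bars bijection: chains $[k]\supseteq G_1\supseteq\cdots\supseteq G_t$ with $\vert G_i\vert=a_i$ correspond bijectively to depth functions $d\colon[k]\to\mathbb{Z}_{\geq 0}$ via $d(e)=\max\{i\colon e\in G_i\}$, so the sum of $\binom{k}{a_1}\prod\binom{a_{i-1}}{a_i}$ over all valid compositions equals the number of such $d$ with $\sum_{e} d(e)=\ell$, which is $\binom{k+\ell-1}{\ell}$. The identity $\binom{k+\ell-1}{\ell}=\tfrac{k+\ell-1}{\ell}\binom{k+\ell-2}{\ell-1}$ combined with the inequality $k+\ell-1\leq k\ell$, equivalent to $(k-1)(\ell-1)\geq 0$, yields the claimed bound $k\binom{k+\ell-2}{\ell-1}$.

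The main obstacle is the structural bookkeeping of the first two paragraphs, in particular verifying that the successive deletions in forming $V_2,V_3,\ldots$ genuinely contribute $\vert F_{i-1}\vert$ fresh elements lying outside the previous basis and that the partition produced by \cref{lem: F* is union of Fi} is truly disjoint; the stars-and-bars evaluation and the closing binomial manipulation are then routine.
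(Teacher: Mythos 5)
Your proof is correct and arrives at the stated bound, but it takes a noticeably different route than the paper's. Both begin from \cref{lem: F* is union of Fi} and both pass through the intermediate count $\binom{k+\ell-1}{\ell}$, but the paper reaches that count via an informal argument about ``chains of replacement elements'' branching off each element of $F_1$ (a multiset count that is then collapsed with Vandermonde's identity), whereas you make the nesting explicit: $F_i\subseteq V_i\setminus V_{i-1}$ with $\lvert V_i\setminus V_{i-1}\rvert=\lvert F_{i-1}\rvert$, giving the clean bound $\binom{k}{a_1}\prod_{i\geq 2}\binom{a_{i-1}}{a_i}$ and a direct stars-and-bars bijection. This is arguably more rigorous, since the paper's chain picture glosses over why the replacements stay disjoint and why each chain contributes independently; your verification that the $\lvert F_{i-1}\rvert$ fresh replacement elements land outside $V_{i-1}$ and are pairwise distinct fills that in. The bigger difference is the final step. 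The paper does \emph{not} deduce $k\binom{k+\ell-2}{\ell-1}$ from $\binom{k+\ell-1}{\ell}$ by arithmetic; instead it runs the counting again for $\ell-1$, invokes \cref{lem: l-1 elements chosen} to restrict the last element to the most vital element of $\M_{F'}$, and then multiplies by $k$ because the most vital element may change up to $k$ times over $J$ (its defining $\arg\max$ compares linear functions whose crossings need not be equality points). That factorization $k\cdot\binom{k+\ell-2}{\ell-1}$ mirrors how \cref{alg: compute F_i} actually enumerates candidates, which is why the paper favors it, at the cost of a weaker bound. Your route buys a tighter bound ($\binom{k+\ell-1}{\ell}\leq k\binom{k+\ell-2}{\ell-1}$) with less machinery, since you bypass \cref{lem: l-1 elements chosen} entirely. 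One small thing worth adding explicitly: the count applies to all of $J$, not just the fixed $\lambda$, because the bases $V_i$ and hence all the nested sets $V_i\setminus V_{i-1}$ are constant between consecutive equality points; the paper states this, and your argument relies on it implicitly.
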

\begin{proof}
    First, we count the number of sets of the form $F^*$ from \cref{lem: F* is union of Fi} for a fixed $\lambda\in J$.
    Let $\vert F_1 \vert = j$.
    Each element $e\in F_1$ leads to a chain of the form $e_1\coloneqq e,e_2,\dotsc,e_s$ with $s \leq\ell -j+1$ where $e_p$ is the replacement element of $e_{p-1}$ with respect to the basis $B_\lambda^{\dot{\bigcup}_{j=1}^{p-2} F_j}$.
    Here, we set $B_\lambda^\emptyset \coloneqq B_\lambda^*$.
    If we have selected $j$ elements from $B_\lambda^*$ for the set $F_1$, we can choose the remaining $\ell-j$ elements from the $j$ chains for $e\in F_1$.
    However, an element $e_p$ can only be chosen if the elements $e_q$ for $q<p$ are chosen as well.
    This corresponds to the case of drawing $\ell-j$ elements from a set of $j$ elements with replacement, while disregarding the order of the draws.
    This yields $\binom{j+\ell-j+1}{\ell-j}=\binom{\ell-1}{\ell-j}$ many possibilities for a given $j$.
    Since there are $\binom{k}{j}$ many possible sets $F_1$, using Vandermonde's identity we obtain a total of $\sum_{j=1}^{\ell} \binom{k}{j} \binom{\ell-1}{\ell-j} = \binom{k+\ell-1}{\ell}$ many subsets at $\lambda$.
    This bound holds for the whole interval $J$ as the optimal basis $B_\lambda^*$ and all replacement elements $e_p$ for $e\in B_\lambda^*$ and $p=1,\dotsc,s$ remain unchanged on $J$.

    Now, we additionally want to use \cref{lem: l-1 elements chosen}.
    For $\ell-1$, the above argumentation provides exactly $\binom{k+\ell-2}{\ell-1}$ many subsets $F'\subseteq E$ with $\vert F'\vert=\ell-1$.
    To determine the missing $\ell$-th element, we compute the most vital element $e^*$ on $\M_{F'}$ for each of these subsets $F'$.
    Then, $F'+e^*$ is the only relevant subset containing $F'$.
    According to \cref{lem: at least one element from B^*}, there are at most $k$ different most vital elements on $\M_{F'}$ on the subinterval $J$.
    This yields a total of $k\binom{k+\ell-2}{\ell-1}$ many non-dominated subsets on $J$.
\end{proof}
Analogously to the proof of \cref{cor: bound using U}, we can deduce a smaller bound on the number of changepoints from \cref{cor: at most k+l choose l subsets}.
\begin{corollary}
    The optimal interdiction value function $y$ of the parametric matroid $\ell$-interdiction problem has at most $\binom{m}{2}\binom{k+\ell-2}{\ell-1}k \in \O{m^2(k+\ell)^{\ell-1}k}$ many changepoints.
\end{corollary}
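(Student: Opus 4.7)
The plan is to mirror the proof of \cref{cor: bound using U}, but plug in the sharper count of non-dominated subsets from \cref{cor: at most k+l choose l subsets} instead of the weaker count from \cref{rem: at most kl choose l subsets}. The overall structure is the same: bound changepoints per equality-point subinterval, then multiply by the number of such subintervals and add the boundary contributions.

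First, I would fix a subinterval $J=(\lambda_i,\lambda_{i+1})\subseteq I$ between two consecutive equality points and recall the invariance stated after the definition of $J$: the bases $B_\lambda^*$ and $B_\lambda^F$, together with all replacement candidates and replacement elements, are constant on $J$. This has two useful consequences: (i) for every fixed $F\subseteq E$ with $\vert F\vert=\ell$, the function $y_F$ is a single linear function on $J$, and (ii) the classification of sets as dominated or non-dominated is constant on $J$, so \cref{cor: at most k+l choose l subsets} applies uniformly and gives at most $k\binom{k+\ell-2}{\ell-1}$ non-dominated sets on $J$.

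Next, I would argue that $y$ restricted to $J$ is the upper envelope of at most $k\binom{k+\ell-2}{\ell-1}$ linear functions. The upper envelope of $N$ linear functions on an interval changes slope at most $N-1$ times, so there are at most $k\binom{k+\ell-2}{\ell-1}-1$ interdiction points in the open subinterval $J$. Together with at most one breakpoint at the right endpoint of $J$ (i.e.\ at the equality point $\lambda_{i+1}$ itself, where $y_{F^*}$ can change slope because a replacement element swaps in the constant basis on the next subinterval), we get at most $k\binom{k+\ell-2}{\ell-1}$ changepoints contributed by $J$.

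Finally, I would sum over all subintervals. Since there are at most $\binom{m}{2}$ equality points in $I$, there are at most $\binom{m}{2}+1$ such subintervals; absorbing the boundary terms exactly as in the proof of \cref{cor: bound using U} yields the bound $\binom{m}{2}\binom{k+\ell-2}{\ell-1}k$. The asymptotic simplification uses that $\ell$ is constant (\cref{ass: l-interdiction}), so $\binom{k+\ell-2}{\ell-1}\in\O{(k+\ell)^{\ell-1}}$, giving $\O{m^2(k+\ell)^{\ell-1}k}$ as claimed. There is no real obstacle here; the only mildly delicate point is the correct accounting of the single boundary breakpoint per subinterval, which is handled exactly as in \cref{cor: bound using U}.
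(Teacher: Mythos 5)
Your proposal is correct and is essentially the paper's own argument: the paper proves this corollary by noting it follows ``analogously'' to \cref{cor: bound using U}, simply substituting the sharper count $k\binom{k+\ell-2}{\ell-1}$ of non-dominated sets from \cref{cor: at most k+l choose l subsets} for the coarser count used there. Your per-subinterval envelope argument, the treatment of interdiction points versus the single boundary breakpoint, and the summation over $\binom{m}{2}$ equality-point subintervals match the intended proof exactly.
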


In the following, we show how the sets $F\subseteq E$ from \cref{cor: at most k+l choose l subsets} can be computed on $J$.
To do so, we extend the algorithm of \cite{bazgan2011efficient} for non-parametric minimum spanning trees to parametric matroids.
First, we generalize Lemma~3 of \cite{bazgan2011efficient}.
Recall the bases $B_\lambda^i$ from \cref{def: bases Bi of Ui}.
\begin{lemma}\label{lem: r(e) in Bi+1 für e in Bi}
    Let $\lambda\in J$.
    For each element $e\in B_\lambda^i$, the replacement element $r_\lambda(e)$ with respect to $B_\lambda^i$ is contained in $B_\lambda^{i+1}$ for each $i=0,\dotsc,\ell-1$.
\end{lemma}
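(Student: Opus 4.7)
My plan is to argue by contradiction using the greedy characterization of the minimum-weight basis $B_\lambda^{i+1}$ of $\M_{U_\lambda^i}$ together with repeated applications of the strong circuit elimination axiom. Fix $\lambda \in J$ and $e \in B_\lambda^i$, set $r := r_\lambda(e)$, and suppose $r \notin B_\lambda^{i+1}$. Since $r$ is a replacement candidate for $e$ with respect to $B_\lambda^i$, one has $r \in E \setminus U_\lambda^i$, so $r$ lies in the ground set of $\M_{U_\lambda^i}$. Because $B_\lambda^{i+1}$ is the minimum-weight basis of $\M_{U_\lambda^i}$ and $r \notin B_\lambda^{i+1}$, the greedy algorithm run on $\M_{U_\lambda^i}$ rejects $r$, so there exists a circuit $C$ of $\M$ with $r \in C$, $C - r \subseteq B_\lambda^{i+1}$, and $w(g,\lambda) < w(r,\lambda)$ for every $g \in C - r$ (weights on $J$ are pairwise distinct by Assumption \ref{ass: d}). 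Moreover, the fundamental circuit $C_r$ of $r$ with respect to $B_\lambda^i$ contains $e$, because $B_\lambda^i - e + r$ is independent by the very definition of $r_\lambda(e)$.

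Next I would merge these two circuits. Since $B_\lambda^i \cap B_\lambda^{i+1} = \emptyset$ and $e \neq r$, we have $e \notin C$. Applying the strong circuit elimination axiom to $C$ and $C_r$ at their common element $r$, while retaining $e$, yields a circuit $C_3$ of $\M$ with $e \in C_3 \subseteq (C \cup C_r) - r \subseteq B_\lambda^i \cup B_\lambda^{i+1}$ and $C_3 \cap B_\lambda^{i+1} \subseteq C - r$; in particular every element of $C_3 \cap B_\lambda^{i+1}$ is strictly lighter than $r$ at $\lambda$. This intersection is non-empty because $B_\lambda^i$ is independent while $C_3$ is dependent.

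The heart of the proof is then an induction on $t := |C_3 \cap B_\lambda^{i+1}|$. If $t = 1$, say $C_3 \cap B_\lambda^{i+1} = \{f\}$, then $C_3$ is the fundamental circuit of $f$ in $B_\lambda^i$ and $e \in C_3$, so $B_\lambda^i - e + f$ is independent; hence $f \in R_\lambda(e)$ with $w(f,\lambda) < w(r,\lambda)$, contradicting the minimality of $r$. If $t > 1$, pick any $f \in C_3 \cap B_\lambda^{i+1}$ and look at the fundamental circuit $C_f$ of $f$ in $B_\lambda^i$: if $e \in C_f$, the same argument closes the proof immediately, otherwise another application of the strong circuit elimination axiom to $C_3$ and $C_f$ at $f$ produces a replacement circuit still containing $e$ and still contained in $B_\lambda^i \cup B_\lambda^{i+1}$, but whose intersection with $B_\lambda^{i+1}$ is a proper subset of the previous one (and hence still a subset of the original $C - r$). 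Iterating this step drives $t$ down to one and yields the desired contradiction. The only delicate point is setting up the first circuit $C_3$ so that both invariants — the containment $C_3 \cap B_\lambda^{i+1} \subseteq C - r$ and the strict weight bound $w(\cdot,\lambda) < w(r,\lambda)$ on this intersection — are automatically preserved by each subsequent exchange; once the initial step is arranged as above, both persist trivially because every iteration only removes elements from the $B_\lambda^{i+1}$-part of $C_3$.
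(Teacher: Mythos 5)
Your argument is correct, but it takes a genuinely different and considerably heavier route than the paper's. The paper's proof is a one-liner built on Observation~\ref{obs: deleting F in arbitrary order}: since $B_\lambda^{i+1}$ is the optimal basis of $\M_{U_\lambda^i} = (\M_{U_\lambda^{i-1}})_{B_\lambda^i}$, one may delete the elements of $B_\lambda^i$ in any order and replace them one at a time; deleting $e$ \emph{first} brings $r_\lambda(e)$ into the current basis, and because $r_\lambda(e) \notin B_\lambda^i$ it is never subsequently removed, so $r_\lambda(e) \in B_\lambda^{i+1}$, with the general $i$ handled by restricting to $\M_{U_\lambda^{i-1}}$. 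Your proof instead proceeds by contradiction: you extract the fundamental circuit of $r$ with respect to $B_\lambda^{i+1}$ (using the greedy/cycle property to bound the weights), merge it with the fundamental circuit of $r$ with respect to $B_\lambda^i$ via strong circuit elimination, and then drive the number of $B_\lambda^{i+1}$-elements in the resulting circuit down to one by repeated elimination, at which point you exhibit a lighter replacement candidate for $e$, contradicting minimality of $r_\lambda(e)$. All the steps check out — the invariants $C_3 \cap B_\lambda^{i+1} \subseteq C - r$ and $e \in C_3$ are preserved because $C_f - f \subseteq B_\lambda^i$ is disjoint from $B_\lambda^{i+1}$, the intersection stays non-empty because $B_\lambda^i$ is independent, and the cardinality $t$ strictly decreases so the process terminates. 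The trade-off is that your argument is self-contained at the level of circuit axioms and does not lean on the arbitrary-deletion-order observation, but it is substantially longer and less transparent than the constructive argument the paper actually uses, which reads the conclusion directly off a choice of deletion order.
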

\begin{proof}
    Consider $g\in B_\lambda^0$.
    We remove $g$ and replace it by $r_\lambda(g)$ to obtain $B_\lambda^g=B_\lambda^*-g+r_\lambda(g)$.
    By \cref{obs: deleting F in arbitrary order}, we can iteratively remove and replace all elements from $B_\lambda^0-g$ from $B_\lambda^0$ to obtain $B_\lambda^{B_\lambda^0}=B_\lambda^1$.
    Hence, we have $r_\lambda(g)\in B_\lambda^1$ and the claim follows by induction.
\end{proof}
The algorithm computes all subsets $F\subseteq E$ from \cref{cor: at most k+l choose l subsets} and the corresponding objective function $y_F$ on $J$.
A search tree of depth $\ell-1$ is constructed to obtain these subsets.
The following objects belong to a node $s$ of level $i$.
\begin{itemize}[noitemsep]
    \item $F(s)$ is a subset of $i$ elements belonging to a tentative subset of the $\ell$-most vital elements.
    \item $\Tilde{U}(s) = \bigcup_{j=0}^{\ell-i} T_j(s)$ where $T_0(s)=B_\lambda^{F(s)}$ and for $p>0$, $T_p(s)$ is the optimal basis of the matroid $\M_{F(s)\cup\bigcup_{j=0}^{p-1} T_j(s)}$ where the set $F(s)$ and the bases $T_0(s),\dotsc,T_{p-1}(s)$ are already removed.
    \item $f(s)$ is a subset of $T_0(s)$ of elements forbidden to delete. These elements belong to every optimal basis of descendants of $s$.
    The cardinality $\vert f(s)\vert$ varies between 0 and $k-1$ depending on the position of $s$ in the search tree.
\end{itemize}
Later, we call the algorithm only for a fixed point $\lambda\in J$ and, therefore, do not refer to $\lambda$ in our notation.
For $i=0,\dotsc,\ell-1$, let $N_i$ be the set of nodes at level $i$.
The root $a$ of the search tree is initialized by setting
$$F(s)=f(s)=\emptyset, \; \Tilde{U}(a)=U_\lambda^\ell, \; w(T_0(a),\lambda)=w(T_0,\lambda), \; \text{and} \; N_0=\Set{a}.$$
Then, we need to determine the replacement elements $r_\lambda(e)$ of all elements $e$ of the corresponding basis $T_0(s)$ for each $s\in N_i$ at each level $i=0,\dotsc,\ell-1$.
At node $s$, the elements allowed to be interdicted are $T_0(s)\setminus f(s)$.
In order to avoid the same results twice on two different paths of the search tree, we enumerate the elements of $T_0(s)\setminus f(s)=\Set{e_1,\dotsc,e_{k-\vert f(s)\vert}}$.
Each of these elements $e_j$ provides a child $d$ of $s$ whose corresponding objects are computed as follows.
\begin{itemize}[noitemsep]
    \item $F(d)=F(s)+e_j$
    \item $f(d)=f(s) \cup (\bigcup_{q = 1}^{j-1} e_q)$
    \item $\Tilde{U}(d)$ is obtained from $\Tilde{U}(d)$ as follows
            \begin{itemize}[noitemsep]
                \item $T_0(d)=T_0(s)-e_j+r_\lambda(e_j)$
                \item $T_j(d)$ is derived from $T_j(s)$ for $j=1,\dotsc,\ell-\vert F(s)\vert$ as follows.
                First, delete the replacement element $r_\lambda(e)$ with respect to $T_{j-1}$ from $T_j(s)$ where $e$ is the element deleted from $T_{j-1}(s)$.
                Then, replace $g=r_\lambda(e)$ by its replacement element $r_\lambda(g)$ with respect to $T_j(s)$ where $r_\lambda(g)\in T_{j+1}(s)$ by \cref{lem: r(e) in Bi+1 für e in Bi}.
            \end{itemize}
\end{itemize}
Note that the construction of all the bases $T_j(d)$ leads exactly to the chains of replacement elements from  \cref{cor: at most k+l choose l subsets}.
At the last level $\ell-1$, we can delete one more element from the basis $T_0(s)$ at each node $s\in N_{\ell-1}$.
By \cref{lem: l-1 elements chosen}, it suffices to consider $F(s)+e^*$ where $e^*$ is the most vital element on $\M_{F(s)}$.
We later run the algorithm only for a fixed $\lambda$ between two consecutive equality points and must therefore take into account that the most vital element can change on such an interval.
According to \cref{lem: at least one element from B^*}, for each $s\in N_{\ell-1}$, we obtain the $k$ relevant subsets $F=F(s)+e$ for each $e\in B_\lambda^{F(s)}$.
The corresponding function value $y_F(\lambda)$ computes as $y_F(\lambda)=w(T_0(s),\lambda)-w(e,\lambda)+w(r_\lambda(e),\lambda)$.

The procedure is summarized in \cref{alg: compute F_i}.
There are three differences to the algorithm for minimum spanning trees.
Trivially, the replacement edges are exchanged for replacement elements.
The second difference lies in the computation of the replacement elements to determine the set $\tilde{U}(d)$. The details are given in the proof of \cref{theo: correctness alg Fi}.
Third, due to the parametric setting, we do not compute the set of $\ell$-most vital edges $F^*$ at $\lambda$ but relevant candidates $F$ and the corresponding function $y_F$.
This is because we later compute the upper envelope of these functions to obtain a solution of \cref{prob: l-matroid} on a whole subinterval of $I$.

\begin{algorithm}[!tp] 
    \Input{A matroid \M{} with weights $w(e,\lambda)$ and a fixed point~$\lambda\in J$.} 
    \Output{A set of candidates $\mathcal{C}$ for the $\ell$-most vital elements and the functions $y_F$ for $F\in\mathcal{C}$ at $\lambda$.}
    
    Compute $U_\lambda^\ell$\;
    Let $a$ be the root of the search tree\;
    Set $F(a) \algass \emptyset$, $f(a)\algass \emptyset$, $w(T_0(a),\lambda)\algass w(T_0,\lambda)$ and $\Tilde{U}(a)\algass U_\lambda^\ell$\;
    Set $N_0\algass \Set{a}$ and $N_i\algass\emptyset$ for $i=1,\dotsc,\ell-1$\;
    \For{$i = 0, \dotsc, \ell-2$}{
    	\For{$s\in N_i$}{
    		\For{$e\in T_0(s)$}{
    			Determine $r_\lambda(e)$ which is contained in $T_1(s)$\;
    		}
    		\For{$e_j\in T_0(s)\setminus f(s)$}{
    			Create child $d$ of $s$\;
    			Set $F(d)\algass F(s)+e_j$\;
    			Set $f(s) \algass f(s) \cup ( \bigcup_{q = 1}^{j-1} e_q)$\;
    			Compute $w(T_0(d),\lambda)\algass w(T_0(s),\lambda)-w(e_j,\lambda)+w(r_\lambda(e_j),\lambda)$\;
    			Determine $\Tilde{U}(d)$\;
    			Set $N_{i+1} \algass N_{i+1}+d$
    		}
    	}
    }
    Set $\mathcal{C}\algass\emptyset$\;
    \For{$s\in N_{\ell-1}$}{
    	\For{$e\in T_0(s)$}{
    		Determine $r_\lambda(e)$ which is contained in $T_1(s)$\;
    		Set $F\algass F(s)+e$\;
    		Set $\mathcal{C}\algass\mathcal{C}\cup F$\;
    		Compute $y_F(\lambda)=w(T_0(s),\lambda)-w(e,\lambda)+w(r_\lambda(e),\lambda)$\;
    	}
    }
    \Return Set $\mathcal{C}$ with corresponding functions $y_{F}$ for $F\in\mathcal{C}$\;

    \caption{An algorithm for computing the $k\binom{k+\ell-2}{\ell-1}$ relevant candidates $F\subseteq E$ for $F^*$ with corresponding objective $y_F$ on $J$. 
    \label{alg: compute F_i}}
\end{algorithm}

\begin{theorem}\label{theo: correctness alg Fi}
    For a fixed value $\lambda\in I$, \cref{alg: compute F_i} computes the $k\binom{k+\ell-2}{\ell-1}$ relevant subsets $F\subseteq E$ with corresponding function $y_F$ in time
    \begin{displaymath}
    \mathcal{O}\Big((k+\ell)^{\ell-1}\big(H(\m)+k\ell f(m) \big) + \m\big(\log m +\ell f(\m)\big)\Big).
    \end{displaymath}
\end{theorem}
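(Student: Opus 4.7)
The plan is to establish correctness by induction on the depth of the search tree and then to bound the running time by counting nodes and per-node work.

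For correctness, I would first prove by induction on $i$ the invariant that every node $s\in N_i$ satisfies $T_0(s)=B_\lambda^{F(s)}$, and that $T_1(s),\dots,T_{\ell-i}(s)$ are the successive next-best bases obtained by iteratively applying \cref{def: bases Bi of Ui} to the deleted matroid $\M_{F(s)}$. The base case is the root, which holds by construction. In the inductive step, the child $d$ obtained by interdicting $e_j\in T_0(s)\setminus f(s)$ yields $T_0(d)=T_0(s)-e_j+r_\lambda(e_j)=B_\lambda^{F(d)}$ by combining \cref{obs: deleting F in arbitrary order} with \cref{lem: replacement element}. For $j\geq 1$, \cref{lem: r(e) in Bi+1 für e in Bi} guarantees that the replacement element needed to restore $T_j(d)$ lies in $T_{j+1}(s)$, so restricting the independence tests to the $k$ elements of that basis is both correct and sufficient to maintain the invariant.

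Next I would argue that the collection $\mathcal{C}$ produced at the leaves equals the $k\binom{k+\ell-2}{\ell-1}$ relevant candidates from \cref{cor: at most k+l choose l subsets}. The forbidden set $f(s)$ enforces an ordering on the $\ell-1$ chain ancestors chosen along a root-to-leaf path, so each relevant multiset of $\ell-1$ elements is generated along exactly one such path; this recovers the count $\binom{k+\ell-2}{\ell-1}$ from the corollary's Vandermonde identity. At each leaf, \cref{lem: l-1 elements chosen} and \cref{lem: for mve only elements of B*} reduce the choice of the $\ell$-th interdicted element to one of the $k$ elements of $T_0(s)=B_\lambda^{F(s)}$, and the formula $y_F(\lambda)=w(T_0(s),\lambda)-w(e,\lambda)+w(r_\lambda(e),\lambda)$ is justified by \cref{lem: replacement element}.

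For the running time, preprocessing $U_\lambda^\ell$ takes $\O{m(\log m+\ell f(m))}$: sort $E$ once, then run the greedy algorithm $\ell+1$ times. By the same multiset enumeration as in the proof of \cref{cor: at most k+l choose l subsets}, level $i$ of the tree contains at most $\binom{k+i-1}{i}$ nodes, so the hockey stick identity yields a total of $\binom{k+\ell-1}{\ell-1}\in\O{(k+\ell)^{\ell-1}}$ nodes. At each node, computing all replacement elements of $T_0(s)$ costs amortised time $H(m)$. For each of its at most $k$ children, the updates of $T_1,\dots,T_{\ell-\vert F(s)\vert}$ each amount to one replacement-element search inside a $k$-element candidate set, costing $\O{kf(m)}$, which sums to $\O{\ell k f(m)}$ per child; charging this work to the child and summing over all nodes yields the claimed bound. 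The main obstacle I anticipate is making the $T_j$-update argument fully rigorous, i.e.\ verifying that the single-element change in $T_{j-1}$ propagates to $T_j$ via exactly one remove-and-replace swap and that the ``$j$-th next-best basis'' invariant survives this local surgery; once this is pinned down, the correctness of $\mathcal{C}$ and the runtime both follow from the counting above.
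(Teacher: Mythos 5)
Your proposal is correct and follows essentially the same route as the paper: the node-count formula $\lvert N_i\rvert=\binom{k+i-1}{i}$, the per-node cost split into $H(m)$ for replacement elements plus $\O{k\ell f(m)}$ for cascading the $T_j$-updates via \cref{lem: r(e) in Bi+1 für e in Bi}, and the $\O{m(\log m+\ell f(m))}$ preprocessing for $U_\lambda^\ell$. The only difference is that you spell out the correctness induction (which the paper defers to the analogous argument of \cite{bazgan2011efficient}), and your concern about the one-swap propagation is exactly the point that is resolved by iterating \cref{lem: replacement element} and \cref{obs: deleting F in arbitrary order} on the successively deleted matroids.
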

\begin{proof}
    The correctness follows analogously to the correctness of the algorithm of \cite{bazgan2011efficient}.
    Similarly, we obtain the following equation for the running time
    $$t_u + \sum_{i=0}^{\ell-1}\vert N_i\vert t_{rep} + \sum_{i=1}^{\ell-1}\vert N_i\vert t_{gen} + \vert N_{\ell-1} \vert k,$$
    where we denote by $t_u$ the time needed to compute $U_\lambda^\ell$, by $t_{rep}$ the time needed to compute the replacement elements of all elements of a given basis, and by $t_{gen}$ the time for generating a node $s$ of the search tree.
    Analogously, the number $\vert N_i \vert$ of nodes at level $i$ is given by $\vert N_i \vert = \binom{k+i-1}{i}$.
    The time $t_u$ is in $\O{m\ell f(m)}$ and in $\O{m\log m +m\ell f(m)}$ if the elements have to be sorted first.
    The time $t_{rep}$ is in $\O{H(m)}$.
    To generate a node $s$ of the search tree we have to determine $F(s),f(s)$, and $\tilde{U}(s)$.
    The sets $F(s)$ and $f(s)$ can be computed in $\O{1}$ and $\O{\ell}$ time, respectively.
    Let $e \in T_{j-1}$ and $g=r_\lambda(e)$ with respect to $T_{j-1}(s)$ for some $j\in\Set{1,\dotsc,\ell-\vert F(s)\vert}$.
    To compute the element $r_\lambda(g)$, it is sufficient to test all elements of $T_{j+1}(s)$ according to \cref{lem: r(e) in Bi+1 für e in Bi}.
    This results in $k$ independence tests.
    Thus, at level $i$, we get $(\ell-i+1)k$ many independence tests and, therefore, $t_{gen}$ is in $\O{k\ell f(m)}$.
    In summary, we obtain a total running time of
    \begin{align*}
        &\O{\m\log \m + \m\ell f(\m) + (k+\ell)^{\ell-1}H(\m) + (k+\ell)^{\ell-1}k\ell f(\m)} \\
        =\; &\mathcal{O}\Big((k+\ell)^{\ell-1}\big(H(\m)+k\ell f(m) \big) + \m\big(\log m +\ell f(\m)\big)\Big).
    \end{align*}
\end{proof}

\section{Algorithms}\label{sec: algorithms}
We use our results from the previous sections to develop three exact algorithms for the parametric matroid $\ell$-interdiction problem.
The algorithms run in polynomial time whenever an independence test can be performed in time polynomial in the input length.
All our algorithms compute the upper envelope $y$ of the functions $y_F$, where $F\subseteq E$ with $\vert F\vert=\ell$.
For a value $\lambda\in I$, the set $F^*$ of $\ell$-most vital elements at $\lambda$ can then directly be derived from $y(\lambda)=y_{F^*}(\lambda)$.

\textbf{Algorithm 1.}
Our first algorithm follows from \cref{obs: y is upper env continuous}.
For each subset $F\subseteq E$ of $\ell$ elements, we solve the parametric matroid problem on the matroid $\M_F$ where the set $F$ is interdicted.
As a result, we obtain all functions $y_F$.
We then use the algorithm of \cite{hershberger1989finding} to obtain the upper envelope $y$ of all functions $y_F$.
\begin{theorem}\label{theo: algorithm 1 for l}
    \cref{prob: l-matroid} can be solved in time
    \begin{displaymath}
        \mathcal{O}\big(m^{\ell+1}(mf(m)+k^{\frac{1}{3}}\ell\log (mk))\big). 
    \end{displaymath}
\end{theorem}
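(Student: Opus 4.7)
The plan is straightforward: correctness follows immediately from \cref{obs: y is upper env continuous}, since once the upper envelope $y$ has been computed, a set $F^*$ of $\ell$-most vital elements at any $\lambda\in I$ can be read off from the identity $y(\lambda)=y_{F^*}(\lambda)$. What remains is to verify the claimed running time, which I would split into the two natural phases of the algorithm.

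Phase one computes $y_F$ for every $F\subseteq E$ with $\vert F\vert=\ell$. There are $\binom{m}{\ell}\in\O{m^\ell}$ such subsets, and by Assumption \ref{ass: b} the deleted matroid $\M_F$ still admits a basis of rank $k$, so the parametric matroid algorithm recalled in \cref{sec: prelims} applies to $\M_F$ and runs in $\O{m^2(f(m)+\log m)}\subseteq\O{m^2 f(m)}$ time under the standard assumption that $\log m\in\O{f(m)}$. Summed over all $F$, this phase costs $\O{m^{\ell+2}f(m)}=\O{m^{\ell+1}\cdot mf(m)}$, which is the first summand in the claimed bound.

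Phase two computes the upper envelope of the family of functions $y_F$ using the algorithm of \cite{hershberger1989finding}. By the tight $\O{mk^{1/3}}$ bound on the number of breakpoints of a parametric matroid optimal value function cited in \cref{sec: prelims}, each $y_F$ consists of $\O{mk^{1/3}}$ linear pieces, so aggregating over the $\O{m^\ell}$ functions gives $n\in\O{m^{\ell+1}k^{1/3}}$ input segments. Hershberger's procedure handles $n$ line segments in $\O{n\log n}$ time, yielding $\O{m^{\ell+1}k^{1/3}\log(m^{\ell+1}k^{1/3})}=\O{m^{\ell+1}k^{1/3}\,\ell\log(mk)}$, using that $\ell$ is constant together with $\log m,\log k\leq\log(mk)$; this is exactly the second summand.

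Adding the two phase costs gives the stated running time. The only mildly delicate points are simplifying $\log(m^{\ell+1}k^{1/3})$ into $\O{\ell\log(mk)}$ and absorbing the secondary $m^{\ell+2}\log m$ term from phase one into $m^{\ell+2}f(m)$; neither is a real obstacle, since both rely only on the constancy of $\ell$ and the standing assumption on $f$.
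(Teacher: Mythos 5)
Your overall plan coincides with the paper's: compute each $y_F$ by running the parametric matroid algorithm on $\M_F$, then take the upper envelope with Hershberger's algorithm, and read off $F^*$ from $y(\lambda)=y_{F^*}(\lambda)$. The correctness reasoning and the phase-two analysis (bounding the total number of segments by $\O{m^{\ell+1}k^{1/3}}$ and simplifying $\log(m^{\ell+1}k^{1/3})$ to $\O{\ell\log(mk)}$) match the paper exactly.

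There is, however, a genuine gap in your phase-one accounting. You charge the full parametric-matroid cost $\O{m^2(f(m)+\log m)}$ to each of the $\binom{m}{\ell}$ subsets $F$, which produces an extra $\O{m^{\ell+2}\log m}$ term, and you dispose of it by invoking the assumption $\log m\in\O{f(m)}$. This assumption appears nowhere in the paper, and in the usual independence-oracle model one takes $f(m)\in\O{1}$, in which case $m^{\ell+2}\log m$ is not $\O{m^{\ell+1}(mf(m)+k^{1/3}\ell\log(mk))}$ in general (for instance when $k^{1/3}\ell\ll m$), so the claimed bound would fail. The paper avoids the issue by observing that the $\O{m^2}$ equality points are determined by the weight functions alone and hence are identical for every $\M_F$: it sorts them once, globally, for $\O{m^2\log m}$, and then for each $F$ only needs one greedy run at the initial $\lambda$ plus one independence test per equality point, giving $\O{m^2 f(m)}$ per subset with no $\log m$ factor. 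The $\O{m^2\log m}$ one-time cost is then harmlessly absorbed into $\O{m^{\ell+1}k^{1/3}\ell\log(mk)}$ since $\ell\geq 1$. Replacing your per-subset invocation of the full parametric algorithm with this shared preprocessing removes the need for the extra assumption and recovers the stated bound.
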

\begin{proof}
    First, we compute all equality points and sort them in ascending order in $\mathcal{O}(m^2\log m)$ time.
    
    Next, we compute the optimal basis $B_\lambda^F$ for all subsets $F\subseteq E$ of $\ell$ elements for a point $\lambda\in I$ before the first equality point.
    This provides us with the linear piece of the function $y_F$ before the first equality point.     For one subset $F$, this can be done using the greedy algorithm in $\O{mf(m)}$ time.
    We then iterate over the sequence of equality points and, for each of them, we check if there is a breakpoint of one of the functions $y_F$.
    This can be done at an equality point $\lambda(e\to f)$ by performing an independence test of $B_\lambda^F-e+f$ in $\O{f(m)}$ time.
    Thus, the $\binom{m}{\ell}$ functions $y_F$ can be computed in $\O{m^\ell mf(m) + m^2m^\ell f(m)}=\O{m^{\ell+2}f(m)}$ time.

    Finally, we use the algorithm of \cite{hershberger1989finding} to compute the upper envelope $y$ of the all functions $y_F$.
    This takes $\O{t\log t}$ time, where $t$ is the total number of linear segments of all functions $y_F$.
    We have $t\in\O{m^{\ell+1}k^{\frac{1}{3}}}$ as each of the functions $y_F$ hast at most $\O{mk^{\frac{1}{3}}}$ many breakpoints, cf. \cite{dey1998improved}
    As described above, we obtain the set $F^*$ of $\ell$-most vital elements at $\lambda$ directly from $y(\lambda)$.

    In summary, we obtain a total running time of $\O{m^2\log m + m^{\ell+2}f(m) + m^{\ell+1}k^{\frac{1}{3}}\ell\log(mk)} = \O{m^{\ell+1}(mf(m)+k^{\frac{1}{3}}\ell\log (mk))}$.    
\end{proof}
Our next two algorithms use the structural properties from \cref{sec: structure}. Accordingly, the optimal interdiction value function is determined section by section between two consecutive equality points.

\textbf{Algorithm 2.}
For our second method, we use \cref{lem: F* subset U l+1} algorithmically.
To do so, the set $U_\lambda^{\ell-1}$ must be known for each subinterval between two consecutive equality points.
We use the functions $y_F$ for all $F\subseteq U_\lambda^{\ell-1}$ with $\vert F\vert=\ell$ for the computation of $y$.
We now show how the set $U_\lambda^{\ell-1}$ can be updated at an equality point $\lambda(e\to f)$.
The update step is summarized in \cref{alg: updateU}.
\begin{theorem}\label{theo: update U^l}
    Let $\lambda'\coloneqq\lambda(e\to f)$ be an equality point, $U_1 = U_\lambda^{\ell-1}$ for $\lambda \leq \lambda'$, and $U_2 = U_\lambda^{\ell-1}$ for $\lambda > \lambda'$.
    Then, we can compute $U_2$ from $U_1$ with \cref{alg: updateU} in $\mathcal{O}(f(m))$ time.
\end{theorem}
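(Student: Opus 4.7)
The plan rests on \ref{ass: d}: at $\lambda'$ only the pair $w(e,\cdot), w(f,\cdot)$ becomes equal, so every other pairwise weight comparison is preserved in a neighbourhood of $\lambda'$, and $e, f$ are adjacent in the sorted weight order there. Since the hierarchy $U_\lambda^{\ell-1} = B_\lambda^0 \cup \cdots \cup B_\lambda^{\ell-1}$ is obtained by iterated greedy on this order inside the fixed matroid $\M$, a swap in any layer $B_\lambda^j$ at $\lambda'$ can by \cref{lem: r(e) = f before swap} only be triggered by $w(e) = w(f)$ and hence must involve $e$ and $f$. In particular, an element can cross the boundary of $U_\lambda^{\ell-1}$ at $\lambda'$ only if the pair $\{e, f\}$ itself straddles that boundary.

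If $e \notin U_1$, the situation is either that $f \notin U_1$ as well (in which case neither element is in any basis and $U_2 = U_1$ trivially), or that $f \in B_\lambda^j$ for some $j$. In the latter case a swap in layer $j$ would require $r_\lambda(f) = e$ with respect to $B_\lambda^j$ in $\M_{U_\lambda^{j-1}}$; combined with $w(e) < w(f)$ just before $\lambda'$, this would make $B_\lambda^j - f + e$ a strictly lighter basis, contradicting the minimum-weight property of $B_\lambda^j$. By a symmetric argument, if $e$ and $f$ are both in $U_1$, no element can leave either, so $U_2 = U_1$ as well.

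The only situation where $U$ can change is therefore $e \in U_1$ and $f \notin U_1$. Let $i$ be the layer of $e$, i.e., $e \in B_\lambda^i$. If $i < \ell - 1$, then by \cref{lem: r(e) in Bi+1 für e in Bi} the replacement element $r_\lambda(e)$ with respect to $B_\lambda^i$ in $\M_{U_\lambda^{i-1}}$ lies in $B_\lambda^{i+1} \subseteq U_1$ and cannot equal $f \notin U_1$, so no swap is triggered in layer $i$; in any other layer, neither $e$ nor $f$ is an element of that basis, so again no swap is triggered. Thus $U_2 = U_1$ without any independence test. The only remaining case is $i = \ell - 1$: here $r_\lambda(e)$ with respect to $B_\lambda^{\ell-1}$ in $\M_{U_\lambda^{\ell-2}}$ lives in $E \setminus U_\lambda^{\ell-1}$ and may equal $f$. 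By \cref{lem: r(e) = f before swap} applied to $\M_{U_\lambda^{\ell-2}}$, together with \ref{ass: d} ensuring that $f$ is the unique candidate whose weight hits $w(e,\lambda')$ at $\lambda'$, the breakpoint in $B_\lambda^{\ell-1}$ occurs exactly when $B_\lambda^{\ell-1} - e + f$ is independent in $\M_{U_\lambda^{\ell-2}}$. One independence test then suffices to decide whether $U_2 = U_1 - e + f$ or $U_2 = U_1$, costing $\mathcal{O}(f(m))$, plus constant time for layer and membership lookups from the maintained data structure.

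The main obstacle is ruling out a cascade through the layers in the case $i < \ell - 1$: a~priori a swap in layer $i$ could propagate upwards, demanding up to $\ell$ independence tests. \cref{lem: r(e) in Bi+1 für e in Bi} is precisely what prevents this, since it pins any potential replacement of $e$ inside $U_1$ and hence keeps an outside element $f$ out of every exchange at layers $i < \ell - 1$.
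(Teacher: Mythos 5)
Your decomposition into cases based on whether $e$ and $f$ lie in $U_1$ matches the structure of the paper's proof, and your treatment of the case $e \in U_1$, $f \notin U_1$ is clean: pinning $r_\lambda(e)$ inside $B_\lambda^{i+1}\subseteq U_1$ via \cref{lem: r(e) in Bi+1 für e in Bi} rules out a swap for $i < \ell - 1$ without any independence test, leaving layer $\ell - 1$ as the only place where a test is required.

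There is, however, a genuine gap in the case $e, f \in U_1$, which you dispatch ``by a symmetric argument.'' The argument you gave for $f \in U_1$, $e \notin U_1$ produced a contradiction (a strictly lighter basis), but no such contradiction is available when both elements lie in $U_1$: if $e \in B_\lambda^j$ and $f \in B_\lambda^{j+1}$, a swap at $\lambda'$ genuinely can occur, giving $B_2^j = B_1^j - e + f$. Your opening claim --- that an element can cross the boundary of $U_\lambda^{\ell-1}$ only if $\{e,f\}$ straddles it --- is precisely the statement that needs proof here, not a consequence of ``any swap involves only $e$ and $f$'': once $e$ is ejected from layer $j$, one must still show that $e$ is re-absorbed into a subsequent layer $\le \ell-1$ rather than dropping out of $U_2$ entirely. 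The paper's proof devotes its Case 4 to exactly this, using \cref{lem: r(e) in Bi+1 für e in Bi} to conclude that necessarily $f \in B_1^{j+1}$ (so $s = j+1$) and then tracking the greedy iteration $j+1$ to obtain $B_2^{j+1} = B_1^{j+1} - f + e$, whence the two layer-level swaps cancel and $U_2 = U_1$. Without this step your proof does not establish correctness of \cref{alg: updateU} when both $e$ and $f$ lie in $U_1$.
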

\begin{proof}
    Let $U_1=\bigcup_{j=0}^{\ell-1} B_1^j$ and $U_2=\bigcup_{j=0}^{\ell-1} B_2^j$.
    We consider four cases and distinguish whether $e$ or $f$ is an element of $U_1$.

    \textbf{Case 1:} Let $e,f \notin U_1$ and suppose that $e\in B_2^j$ for some $j$.
    As only $e$ and $f$ change their sorting, the greedy algorithm computes $B_2^i = B_1^i$ for all $i<j$.
    Now, there exists some element $g\in B_1^j$ such that $B_2^j=B_1^j-e+g$ and $w(e,\lambda')=w(g,\lambda')$ but this is a contradiction as $f\notin U_1$ and, hence, $g\neq f$.
    The proof for $f$ can be done analogously such that $e,f \notin U_2$ and no update is needed.

    \textbf{Case 2:} Let $f\in U_1$ and $e\notin U_1$.
    Suppose that $e \in U_2$, then we get a contradiction with the same argument as in Case 1.
    Now, we have $f \in B_1^j$ for some $j$.
    Again, the greedy algorithm computes $B_2^i = B_1^i$ for all $i<j$.
    In iteration $j$, the greedy algorithm tests $f$ before $e$ and chooses $f$ for $B_2^j$, since otherwise there exists a circuit in $B_2^j$ and, hence, in $B_1^j$ not containing $e$ but $f$.
    Therefore, $f \in U_2$ and no update is needed.

    \textbf{Case 3:} Let $e\in U_1$ and $f\notin U_1$.
    Then, $e \in B_1^j$ for some $j$ and $B_2^i = B_1^i$ for all $i<j$.
    Now, we consider iteration $j$ and apply the greedy algorithm before and after $\lambda'$.
    Let $e_1,\dotsc,e_r$ be the elements in $\M_{\bigcup_{i=0}^{j-1} B_1^i}$.
    Let $F_q$ and $F_q'$ be the independent sets obtained after iteration $q$ before and after $\lambda'$, respectively.
    Before $\lambda'$, it holds that $e=e_p$ and $f=e_{p+1}$ for some $p\in\Set{1,\dotsc,r}$, and after $\lambda'$, we have $f=e_p$ and $e=e_{p+1}$.
    For $q<p$, we get $F_q=F_q'$ and in iteration $p$ we have $F_p=F_{p-1}+e$.
    Now, we distinguish the two cases whether $F_{p-1}'+f$ is independent or not.
    
    \textbf{3a:} If $F_{p-1}'+f \notin \F$, we obtain $F_p'+e = F_{p-1}'+e = F_{p-1}+e \in \F$ and, hence, $e\in B_2^j$ such that $B_2^i = B_1^i$ for all $i=j,\dotsc,\ell-1$.

    \textbf{3b:} If $F_{p-1}'+f \in \F$, we obtain $F_p'=F_{p-1}'+f = F_{p-1}+f=F_p-e+f$.
    We show inductively that $F_q'= F_q-e+f$ is maintained for all $q>p$.
    Note that $F_p'+e$ and $F_p+f$ contain a circuit $C$.
    Let $q>p$ and $F_{q-1}'= F_{q-1}-e+f$. We show that $F_{q-1} + e_q \in \F$ if and only if $F_{q-1}' +e_q\in\F$.
    If $F_{q-1}+e_q \in\F$, then $\lvert F_{q-1}'\rvert = \lvert F_{q-1}+e_q\rvert -1$ and, hence, there exists a $g\in F_{q-1}+e_q = F_{q-1}' -f +e +e_q$ such that $F_{q-1}' +g\in\F$.
    As $F_{i}'+e$ contains $C$, it follows that $e_q\neq e$ and, therefore, $g=e_q$.
    Analogously, it follows that $F_{q-1} + e_q \in \F$ if $F_{q-1}' +e_q\in\F$.

    To summarize, in Case 3b, the equation $B_2^j=B_1^j -e+f$ holds, so that according to \cref{lem: r(e) = f before swap}, before the point $\lambda'$, the element $f$ is the replacement element of $e$ with respect to the basis $B_1^j$.
    Thus, $f \in B_1^{j+1}$ by \cref{lem: r(e) in Bi+1 für e in Bi} and according to the condition from Case 3, $j$ equals $\ell-1$ and no basis with an index higher than $j$ needs to be updated.
    Hence, $U_2=U_1-e+f$.
    In Case 3a, no update is required.
    In total, we can update the set $U_1$ in Case 3 by performing a single independence test in $\mathcal{O}(f(m))$ time.

    \textbf{Case 4:}
    Let $e,f\in U_1$. Then, $e\in B_1^j$ and $f\in B_1^s$ for some $j,s\in\Set{0,\dotsc,\ell-1}$.

    \textbf{4a:} If $j=s$, the basis $B_1^j$ and, hence, all bases $B_1^i$ for $i>j$ remain the same after $\lambda'$ such that no update is needed.

    \textbf{4b:} If $s<j$, then before $\lambda'$, $e$ gets tested before $f$ and is rejected.
    Instead, $f$ is chosen so that we get the same bases $B_2^i=B_1^i$ for all $i$ after $\lambda'$, as $f$ is now tested first and chosen again before $e$.

    \textbf{4c:} If $j<s$, we get $B_1^i = B_2^i$ for all $i<j$.
    Before $\lambda'$, the element $e$ is tested and selected first and then $f$ is rejected.
    After $\lambda'$, the element $f$ is tested first and, as in Case 3, we get two cases, depending on whether the independence test of $f$ detects a circuit or not.
    In Case 3a, no update is required and, in Case 3b, we have $B_2^j=B_1^j -e+f$.
    By \cref{lem: r(e) in Bi+1 für e in Bi} it holds that $s=j+1$ and we show that $B_2^{j+1}=B_1^{j+1} -f+e$.
    Then, after iteration $j+1$ before and after $\lambda'$, the same elements were deleted, namely $\bigcup_{i=0}^{j+1} B_2^i = \left(\bigcup_{i=0}^{j-1} B_1^i \right) \cup \left(B_1^j-e+f \right) \cup \left(B_1^{j+1}-f+e\right) = \bigcup_{i=0}^{j+1} B_1^i$ so that we do not need an update, since $e,f\in U_2$ remain.

    We consider iteration $j+1$ and let again $F_q$ and $F_q'$ be the independent sets obtained after iteration $q$ before and after $\lambda'$, respectively.
    Before $\lambda'$, we have $f\in B_1^{j+1}$ and, hence, $f\in F_p$ for some $p$.
    Then, $F_q=F_q'$ for $q<p$ and $F_p=F_{p-1}+f$.
    Since $e=r_\lambda(f) \in B_2^{j+1}$ for $\lambda >\lambda'$ and $e$ is tested immediately after $f$, it follows that $F_p'=F_{p-1}'+e$.
    The equality $F_q'=F_q-f+e$ for all $q>j+1$ can be proven analogously to Case 3b.

    To summarize, in Case 4, we obtain $e,f\in U_2$.
    The checks whether element $e$ or $f$ is contained in a basis $B_1^j$ can be performed in constant time $\mathcal{O}(1)$.
    Overall, \cref{alg: updateU} correctly updates the set $U_1$ in $\mathcal{O}(f(m))$ time.
\end{proof}
If there is an update of $U_\lambda^{\ell-1}$ in Case 3b of \cref{theo: update U^l}, only the element $e$ is replaced by $f$.
This means that a subset $F$ of $U_1$ remains relevant, i.~e.\ it remains a subset of $U_2$, unless $U_2=U_1-e+f$ and $e\in F$.
In this case, we update $F$ to $F-e+f\subseteq U_2$.
We now show how the corresponding basis $B_\lambda^F$ can be updated.

\begin{algorithm}[!tp]
    \SetKwFunction{UpdateU}{UpdateU}
    \KwIn{An equality point $\lambda(e\to f)$, the set $U_1 = U_\lambda^{\ell-1}$ before $\lambda(e\to f)$.}
    \KwOut{The set $U_2 = U_\lambda^{\ell-1}$ after $\lambda(e\to f)$.}
    \Def{\UpdateU{$U_1$, $\lambda(e\to f)$}}{
    	Let $U_1=\bigcup_{i=0}^{\ell-1} B_1^i$\;
    	\uIf{$e\in B_1^{\ell-1}$, $f\notin U_1$ and $B_1^{\ell-1}-e+f$ independent}
    	{
    		$U_2 \algass U_1-e+f$\;
    	}
    	\Else
    	{
    		$U_2 \algass U_1$\;        
    	}
    }
    \caption{An algorithm for updating $U_\lambda^{\ell-1}$ at an equality point.
    \label{alg: updateU}}
\end{algorithm}

\begin{theorem}\label{theo: update B^F for F in U^l}
    Let $\lambda'\coloneqq\lambda(e\to f)$ be an equality point, $U_1 = U_\lambda^{\ell-1}$ for $\lambda \leq \lambda'$, $U_2 = U_\lambda^{\ell-1}$ for $\lambda > \lambda(e\to f)'$ and $F\subseteq U_1$.
    Then, we can update the set $F$ and the corresponding basis $B_\lambda^F$ for $\lambda> \lambda'$ in $\mathcal{O}(f(m))$ time.
\end{theorem}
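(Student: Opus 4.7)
The plan is to mirror the case structure from \cref{theo: update U^l}. By that theorem, $U_2 = U_1$ in every case except the one tested by \cref{alg: updateU} (namely $e \in B_1^{\ell-1}$, $f \notin U_1$, and $B_1^{\ell-1} - e + f \in \F$), where $U_2 = U_1 - e + f$. Hence $F$ itself has to change only when this scenario occurs and, in addition, $e \in F$; in that event we set $F' \coloneqq F - e + f \subseteq U_2$. In all remaining cases we may leave $F' = F$, and the task reduces to updating $B_\lambda^F$ (respectively $B_\lambda^{F'}$) across the equality point $\lambda'$. I would then carry out a case distinction based on whether $e$ and $f$ lie in $F$, showing that each case costs at most one independence test.

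The three easy cases are as follows. (i)~If $e, f \in F$, then neither element participates in $\M_F$, so $B_\lambda^F$ is unchanged. (ii)~If exactly one of $e, f$ lies in $F$ and the scenario above does \emph{not} occur (so $F' = F$), then by Assumption~\ref{ass: d} the only pair of weights meeting at $\lambda'$ is $(e, f)$; because one of them is absent from $E \setminus F$, the sort order within $E \setminus F$ is unchanged at $\lambda'$, and greedy reproduces the same $B_\lambda^F$. (iii)~If $e, f \notin F$, both lie in $E \setminus F$ and the situation is identical to the standard parametric matroid update: a constant-time check whether $e \in B_\lambda^F$, followed by a single independence test of $B_\lambda^F - e + f$, decides whether to swap, and by \cref{lem: r(e) = f before swap} this swap is exactly the update to $B_\lambda^F$ across $\lambda'$.

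The main obstacle is case (iv): $e \in F$, $f \notin F$, and the scenario of \cref{alg: updateU} occurs, so $F' = F - e + f$ and we need $B_\lambda^{F'}$ rather than $B_\lambda^F$. My plan is to exploit the structural information of this scenario---namely $e \in B_1^{\ell-1}$ together with $B_1^{\ell-1} - e + f \in \F$---and the fact that $w(e, \lambda') = w(f, \lambda')$, to argue that $B_\lambda^{F'}$ is obtained from $B_\lambda^F$ either by no change (when $f \notin B_\lambda^F$, because then $B_\lambda^F$ already lies in $E \setminus F'$ and has minimum weight at $\lambda'$ by continuity) or by the single swap replacing $f$ with $e$ (when $f \in B_\lambda^F$, in which case independence of $B_\lambda^F - f + e$ transfers the circuit relation from $B_1^{\ell-1}$ to $B_\lambda^F$). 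The first sub-case is decided by a constant-time membership check, the second by a single independence test of $B_\lambda^F - f + e$. Collecting the cost across all cases gives $\mathcal{O}(1)$ set operations plus at most one independence test, yielding the claimed $\mathcal{O}(f(m))$ bound.
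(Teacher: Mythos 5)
Your case structure and the final update rules match the paper's, but the justification for the hard case~(iv) is not a proof. The argument for the sub-case $f \notin B_\lambda^F$ rests on the claim that $B_\lambda^F$ ``has minimum weight at $\lambda'$ by continuity,'' but feasibility of $B_\lambda^F$ in $\M_{F'}$ does not give optimality in $\M_{F'}$: the ground set of $\M_{F'}$ contains $e$ (which $\M_F$ does not), so a cheaper basis using $e$ could in principle exist, and ``continuity'' does not rule this out. Likewise, the sub-case $f \in B_\lambda^F$ is justified by the vague claim that ``independence of $B_\lambda^F - f + e$ transfers the circuit relation from $B_1^{\ell-1}$ to $B_\lambda^F$''; even if one shows $B_\lambda^F - f + e \in \F$, that establishes feasibility in $\M_{F'}$, not optimality.

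The paper closes exactly this gap by introducing the auxiliary set $X \coloneqq F - e = F' - f$. Applying \cref{lem: F* subset U l+1} with $\ell - 1$ in place of $\ell$ gives $f \notin B_1^X$ (since $f \notin U_1$) and $e \notin B_2^X$ (since $e \notin U_2$). Combined with \cref{obs: deleting F in arbitrary order} and \cref{lem: r(e) = f before swap} this pins down the structure: either $e \notin B_1^X$, in which case $B_1^X = B_2^X$ and $B_2^{F'} = B_2^X = B_1^X = B_1^F$; or $e \in B_1^X$, in which case $B_2^X = B_1^X - e + f$, $B_1^F = B_1^X - e + f$, $B_2^{F'} = B_2^X - f + e$, and therefore $B_2^{F'} = B_1^F - f + e$. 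The final step is to observe that these two alternatives are distinguished precisely by whether $f \in B_1^F$, which is a constant-time membership check---this is what lets the update run in $\O{f(m)}$ without ever computing $B_\lambda^X$. Your plan of leaning on the algorithmic scenario ($e \in B_1^{\ell-1}$ and $B_1^{\ell-1} - e + f \in \F$) does not supply the $B_\lambda^X$-level information that the correctness argument actually needs; you would have to reconstruct it, which is essentially the paper's route via $X$.
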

\begin{proof}
    We define $F_1\coloneqq F$ and use $F_2$ to denote the potentially updated set $F_1$ after $\lambda'$.
    Accordingly, for $\lambda\leq\lambda'$, we define $B_1^F\coloneqq B_\lambda^{F_1}$ and for $\lambda>\lambda'$, we define $B_2^F\coloneqq B_\lambda^{F_2}$.
    We show how the basis $B_2^F$ can be derived from $B_1^F$ .
    
    First, we consider the case $U_2=U_1$.
    Then, $F_1=F_2$.
    We require the optimal basis on $\M_{F_1}$ before and after $\lambda'$, so we only need to check whether $B_1^F-e+f$ is independent.
    This can be done in $\mathcal{O}(f(m))$ time and if the answer is yes, we update $B_2^F=B_1^F-e+f$.
    The same applies if $U_1$ is updated to $U_2=U_1-e+f$, but $e$ is not an element of $F_1$.

    Now, we consider the case that $U_2=U_1-e+f$ and $e\in F_1$.
    Then, $F_1 = X+e$ and $F_2=X+f$.
    We know that $f\notin B_1^X\coloneqq B_\lambda^X$ for $\lambda\leq \lambda'$ and $e\notin B_2^X\coloneqq B_\lambda^X$ for $\lambda > \lambda'$ since $f\notin U_1$ and $e\notin U_2$.

    \textbf{Case 1:} If $e\notin B_1^X$, then $e,f\notin B_1^X$ such that there is no swap between $e$ and $f$ at $\lambda'$ in $B_\lambda^X$ and it follows that $B_1^X=B_2^X$.
    In this case, we obtain $B_2^F=B_2^X=B_1^X=B_1^F$.

    \textbf{Case 2:} If $e\in B_1^X$, there exists a $g\notin B_1^X$ such that $B_2^X=B_1^X-e+g$ since $e\notin B_2^X$.
    But then $w(e,\lambda')=w(g,\lambda')$ such that $g=f$ and $B_2^X=B_1^X-e+f$.
    By \cref{lem: r(e) = f before swap}, we know that $f=r_\lambda(e)$ with respect to basis $B_1^X$ before $\lambda'$ and $e=r_\lambda(f)$ with respect to basis $B_2^X$ after $\lambda'$.
    We obtain $B_1^F=B_1^X-e+f$ and $B_2^F=B_2^X-f+e$.
    In Case 2, it follows that $B_2^F=B_2^X-f+e =B_1^X=B_1^F+e-f$.

    However, only the basis $B_1^F$ is known before $\lambda'$ and the computation of $B_1^X$ can in general not be done in $\mathcal{O}(f(m))$ time.
    In order to keep the promised running time, we show that Case 1 only occurs if $f\notin B_1^F$ and Case 2 only occurs if $f\in B_1^F$.
    These tests can be performed in constant time $\O{1}$.
    Let $f\notin B_1^F$ and suppose that $e\in B_1^X$. It follows from Case 2 that $r_\lambda(e)=f \in B_1^F$ which is a contradiction.
    Further, if $f\in B_1^F$ and $e\notin B_1^X$, we get a contradiction since $f\notin B_2^F$ but $B_2^F=B_1^F$ by Case 1.
    To summarize, we can update the basis $B_1^F$ and the set $F_1$ in $\O{f(m)}$ time.
\end{proof}
Note that updating $B_\lambda^F$ with \cref{theo: update B^F for F in U^l} is faster by a factor of $m$ than recalculating it with the greedy algorithm.
\cref{theo: update B^F for F in U^l} implies that it suffices to consider $\binom{k\ell}{\ell}$ many piecewise linear and continuous functions on the entire parameter interval $I$ for the computation of $y$.
\begin{corollary}
    The optimal interdiction value function $y$ of the parametric matroid $\ell$-interdiction problem has at most $\mathcal{O}(\m^{\ell+1} k^{\frac{1}{3}}\alpha(k\ell))$ many changepoints.
\end{corollary}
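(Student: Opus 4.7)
The plan is to mirror the proof of \cref{theo: l int upper bound 1}, substituting the $\binom{m}{\ell}$ functions $y_F$ considered there by the much smaller collection of piecewise linear continuous functions that \cref{theo: update B^F for F in U^l} makes available globally on $I$. Specifically, by combining \cref{lem: F* subset U l+1}, \cref{rem: at most kl choose l subsets}, and \cref{theo: update B^F for F in U^l}, I obtain $N:=\binom{k\ell}{\ell}$ piecewise linear continuous functions $\tilde y_1,\dots,\tilde y_N\colon I\to\mathbb{R}$ whose pointwise maximum equals $y$. On each subinterval between consecutive equality points, each $\tilde y_i$ coincides with some $y_F$ for a subset $F\subseteq U_\lambda^{\ell-1}$, and \cref{theo: update B^F for F in U^l} explains how to propagate $F$ (together with its basis $B_\lambda^F$) from one subinterval to the next while keeping $\tilde y_i$ continuous.

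Next, I would bound the total number of linear pieces summed over the $N$ trackers by $\mathcal{O}(m^{\ell+1}k^{\frac{1}{3}})$. By \cite{dey1998improved}, each $y_F$ has at most $\mathcal{O}(m k^{\frac{1}{3}})$ breakpoints, so a crude accounting gives $\mathcal{O}(N\cdot m k^{\frac{1}{3}})=\mathcal{O}\bigl(\binom{k\ell}{\ell}\,m k^{\frac{1}{3}}\bigr)\subseteq \mathcal{O}(m^{\ell+1}k^{\frac{1}{3}})$ pieces in total, using $\binom{k\ell}{\ell}\le m^\ell$ for constant $\ell$ (since $k\le m$).

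Finally, Corollary~2.18 of \cite{agarwal1995davenport}, applied to the upper envelope of these $N$ piecewise linear continuous functions with total complexity $\mathcal{O}(m^{\ell+1}k^{\frac{1}{3}})$, produces at most $\mathcal{O}(m^{\ell+1}k^{\frac{1}{3}}\alpha(N))$ changepoints of $y$. Using the same Ackermann-inverse manipulation as in the second half of the proof of \cref{theo: l int upper bound 1}, $\alpha(N)=\alpha\bigl(\binom{k\ell}{\ell}\bigr)\in\mathcal{O}\bigl(\alpha((k\ell)^\ell)\bigr)\in\mathcal{O}(\alpha(k\ell))$, which yields the claimed bound. The subtle point is to argue that a tracker's piece count is not inflated when its underlying $F$ switches via \cref{theo: update B^F for F in U^l}; this is where I rely on the fact that such a switch is a swap $e\leftrightarrow f$ at an equality point $\lambda(e,f)$ between two elements of equal weight, so it contributes at most one additional breakpoint to $\tilde y_i$ beyond those already inherited from the currently tracked $y_F$, and the crude aggregate bound above absorbs this overhead.
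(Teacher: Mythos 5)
Your proposal follows essentially the same route as the paper: both use \cref{theo: update B^F for F in U^l} to assemble $\binom{k\ell}{\ell}$ piecewise-linear continuous tracker functions on all of $I$ whose pointwise maximum is $y$, then apply the Davenport--Schinzel envelope bound and observe $\alpha\bigl(\binom{k\ell}{\ell}\bigr)\in\mathcal{O}(\alpha(k\ell))$. One small caveat (which the paper also glosses over): the per-tracker segment count cannot be read off directly from Dey's $\mathcal{O}(mk^{1/3})$ bound for a single $y_F$, since a tracker concatenates pieces of \emph{different} $y_F$'s; the clean accounting is that the aggregate segment count over all trackers is bounded by the total $\mathcal{O}(m^{\ell+1}k^{1/3})$ breakpoints over all $y_F$'s plus the $\mathcal{O}\bigl(m^2\binom{k\ell-1}{\ell-1}\bigr)\subseteq\mathcal{O}(m^{\ell+1})$ switch events, giving the same numerical bound you claim.
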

\begin{proof}
    Let $\lambda_i=\lambda(e\to f)$ be an equality point and $\lambda_{i-1}$ and $\lambda_{i+1}$ be the next smaller and larger equality point of $\lambda_i$, respectively.
    Let $I_1=(\lambda_{i-1},\lambda_i]$ and $I_2 =(\lambda_i,\lambda_{i+1}]$.
    Let $U_1=U_\lambda^{\ell-1}$ for $\lambda \in I_1$ and $U_2=U_\lambda^{\ell-1}$ for $\lambda \in I_2$.
    Let $F_1\subseteq U_1$. We use $F_2$ to denote the potentially updated set $F_1$ on $I_2$.
    Finally, let $B_1^F\coloneqq B_\lambda^{F_1}$ for $\lambda\in I_1$ and $B_2^F\coloneqq B_\lambda^{F_2}$ for $\lambda\in I_2$.
    
    By \cref{theo: update B^F for F in U^l}, we either have $B_2^F= B_1^F$ or $B_2^F=B_1^F-e+f$ or $B_2^F=B_1^F+e-f$.
    Since $w(e,\lambda_i)=w(f,\lambda_i)$ it follows that $y_{F_1}(\lambda_i)=w(B_1^F,\lambda_i)= w(B_2^F,\lambda_i)=y_{F_2}(\lambda_i)$ such that the function defined by
    \begin{displaymath}
        \lambda \mapsto
        \begin{cases}
            y_{F_1}(\lambda),  & \text{for} \, \lambda \in I_1 \\
            y_{F_2}(\lambda), & \text{for} \, \lambda \in I_2
        \end{cases}
    \end{displaymath}
    is continuous on $(\lambda_{i-1},\lambda_{i+1}]$.

    By \cref{theo: update B^F for F in U^l}, before the first equality point $\lambda_0$, only the subsets $F\subseteq U_0=U_\lambda^{\ell-1}$ for $\lambda\leq\lambda_0$ are relevant.
    Thus, it follows inductively from the above argument that each of these subsets $F$ provides one piecewise linear and continuous function for the entire parameter interval $I$.
    Again, it follows from \cref{theo: update B^F for F in U^l} that we only need to consider these $\binom{k\ell}{\ell}$ many functions for the computation of $y$.
    Thus, it follows from \cite{agarwal1995davenport} that $y$ has at most $\mathcal{O}(\m^{\ell+1} k^{\frac{1}{3}}\alpha(k^\ell\ell^\ell))\in \mathcal{O}(\m^{\ell+1} k^{\frac{1}{3}}\alpha(k\ell))$ many changepoints.    
\end{proof}

We are now ready to state our second algorithm that computes the upper envelope $y$ on each subinterval between two consecutive equality points using \cref{theo: update B^F for F in U^l}.
The algorithm is stated in \cref{alg: compute y using U_i}

\begin{algorithm}[!tp] 
    \Input{A matroid \M{} with edge weights $w(e,\lambda)$ and parameter interval~$I$.} 
    \Output{A representation of the upper envelope $y$.}
    
    Compute all equality points $\lambda_1,\ldots,\lambda_r$ and sort them ascendingly\;
    Let $I_0,\ldots,I_r$ be the corresponding intervals\;
    Compute $U_0$ and optimal bases $B_0^F$ for $F\subseteq U_0$ on $I_0$\;
    Obtain $y^F$ on $I_0$ using $B_0^F$ and compute $y$ on $I_0$\;
    \For{$i = 1, \dotsc, r$}{
    	Update $U_{i-1}$ using \cref{alg: updateU}\;
    	\For{$F\subseteq U_{i-1}$}{
    		\uIf{$U_i=U_{i-1}-e+f$, $e\in F$ and $f\in B_{i-1}^F$}{
    			$F\algass F-e+f$\;
    			$B_i^F\algass B_{i-1}^F+e-f$
    		}
    		\uElseIf{$U_i=U_{i-1}-e+f$, $e\in F$ and $f\notin B_{i-1}^F$}{
    			$F\algass F-e+f$\;
    			$B_i^F\algass B_{i-1}^F$\;
    		}
    		\Else{
    			\uIf{$B_{i-1}^F-e+f$ is independent}{
    				$B_i^F\algass B_{i-1}^F -e+f$\;
    			}
    			\Else{
    				$B_i^F\algass B_{i-1}^F$\;
    			}
    		}
    		Obtain $y^F$ on $I_i$ using $B_i^F$\;
    	}
    	Compute $y$ on $I_i$\;
    }
    \Return $y$\;
    \caption{An algorithm for computing the optimal interdiction value function~$y$ using \cref{theo: update B^F for F in U^l}.
    \label{alg: compute y using U_i}}
\end{algorithm}

\begin{theorem} \label{theo: algorithm using U}
    \cref{prob: l-matroid} can be solved in time
    \begin{displaymath}
        \mathcal{O} \Big(\m^2\big(k^\ell\ell^\ell(f(m)+\log k\ell)+\log m \big)\Big).
    \end{displaymath}
\end{theorem}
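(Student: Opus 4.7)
The plan is to carry out a phase-by-phase running-time analysis of \cref{alg: compute y using U_i}, invoking the update procedures developed earlier as black boxes. First, I would bound the preprocessing: computing and sorting the $\mathcal{O}(m^2)$ equality points costs $\mathcal{O}(m^2\log m)$; computing $U_0$ requires $\ell$ invocations of the greedy algorithm in $\mathcal{O}(m\ell(\log m+f(m)))$ total; and for each of the at most $\binom{k\ell}{\ell}\in\mathcal{O}(k^\ell \ell^\ell)$ subsets $F\subseteq U_0$ of cardinality $\ell$, computing $B_0^F$ by greedy on $\M_F$ costs $\mathcal{O}(mf(m))$, for a total of $\mathcal{O}(mk^\ell \ell^\ell f(m))$. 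All of this is dominated by the main-loop cost below.

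Next, I would analyse the main loop, which iterates over the $\mathcal{O}(m^2)$ subintervals $I_i$. At the transition between $I_{i-1}$ and $I_i$, \cref{alg: updateU} updates $U_{i-1}$ to $U_i$ in $\mathcal{O}(f(m))$ time by \cref{theo: update U^l}. For each of the $\mathcal{O}(k^\ell\ell^\ell)$ subsets $F\subseteq U_{i-1}$, \cref{theo: update B^F for F in U^l} updates $F$ and $B_i^F$ in $\mathcal{O}(f(m))$ time, giving $\mathcal{O}(k^\ell\ell^\ell f(m))$ per interval for the update step. Crucially, on $I_i$ each $y_F$ is a single linear piece (since neither $B_\lambda^F$ nor the linear weight functions change between consecutive equality points), so computing $y$ on $I_i$ reduces to the upper envelope of $\mathcal{O}(k^\ell \ell^\ell)$ lines over an interval, which a standard sweep accomplishes in $\mathcal{O}(k^\ell\ell^\ell \log(k\ell))$ time.

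Summing the loop contribution $\mathcal{O}(m^2 k^\ell \ell^\ell (f(m)+\log(k\ell)))$ with the preprocessing $\mathcal{O}(m^2\log m + m k^\ell \ell^\ell f(m))$ yields exactly the claimed bound $\mathcal{O}(m^2(k^\ell \ell^\ell(f(m)+\log(k\ell))+\log m))$. Correctness follows because, by the corollary preceding the algorithm, only the $\binom{k\ell}{\ell}$ subsets of the current $U_\lambda^{\ell-1}$ are relevant on each subinterval, and their corresponding functions $y_F$ are correctly maintained by the update theorems; then, per subinterval, $F^*$ at any $\lambda$ is read off from $y(\lambda)=y_{F^*}(\lambda)$. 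The only non-trivial part of the analysis is the accounting itself: one must verify that on each $I_i$ the objects $y_F$ really are lines (so an $\mathcal{O}(n\log n)$ envelope routine suffices in place of a Davenport–Schinzel upper-envelope algorithm) and that both update routines honor the claimed $\mathcal{O}(f(m))$ bound in every case distinguished in \cref{theo: update U^l} and \cref{theo: update B^F for F in U^l}; everything else is direct substitution.
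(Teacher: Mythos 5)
Your proposal follows essentially the same route as the paper's own proof: sort the $\mathcal{O}(m^2)$ equality points, initialize $U_0$ and the $\binom{k\ell}{\ell}$ bases $B_0^F$ by greedy, then per subinterval update $U$ via \cref{alg: updateU} in $\mathcal{O}(f(m))$, update each $F$ and $B_i^F$ via \cref{theo: update B^F for F in U^l} in $\mathcal{O}(f(m))$, and take the upper envelope of the $\mathcal{O}(k^\ell\ell^\ell)$ lines in $\mathcal{O}(n\log n)$ time. Your observation that each $y_F$ is a single line on $I_i$ (so a plain $\mathcal{O}(n\log n)$ line-envelope sweep suffices) is a nice explicit remark; the paper simply invokes Hershberger's algorithm, and both lose or gain an extra $\ell$ factor in $\log(k^\ell\ell^\ell)$ versus $\log(k\ell)$, which is immaterial since $\ell$ is constant.
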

\begin{proof}
    First, we compute all equality points $\lambda_1,\ldots,\lambda_r$ and sort them ascendingly in $\O{m^2\log m}$ time.
    We define the intervals of consecutive equality points $I_0\coloneqq (-\infty,\lambda_1]$, $I_i\coloneqq [\lambda_i,\lambda_{i+1}]$ for $1\leq i < r$, and $I_r\coloneqq [\lambda_r,\infty)$.
    For each $i=0,\dotsc,r$, let $U_i\coloneqq U_\lambda^{\ell-1}$.
    Further, for each $i=1,\dotsc,r$ and $F\subseteq U_{i-1}$, let $B_{i-1}^F=B_\lambda^F$ for $\lambda\in I_{i-1}$.
    At the equality point $\lambda_i$, we then obtain the potentially updated basis $B_i^F$ for the potentially updated set $F\subseteq U_i$.
    
    Before the first equality point $\lambda_1$, we compute the set $U_0$ in $\O{m\log m +m\ell f(m)}$ time.
    Afterwards, we compute optimal bases $B_0$ for each subset $F\subseteq U_0$ for a fixed value $\lambda\in I_0$.
    This requires $\binom{k\ell}{\ell}$ many calls of the greedy algorithm which yields a running time of $\O{k^\ell\ell^\ell mf(m)}$.
    Thus, the initialization step takes $\O{m^2\log m + k^\ell\ell^\ell mf(m)}$ time.

    Next, we compute the upper envelope $y$ separately on each of the $\O{m^2}$ intervals $I_0,\dotsc,I_r$.
    At equality point $\lambda_i$, we update the set $U_\lambda^{\ell-1}$ from $U_{i-1}$ to $U_i$ in $\O{f(m)}$ time using \cref{alg: updateU}.
    Next, we update the subsets $F \subseteq U_{i-1}$ and the corresponding bases $B_{i-1}^F$, each in $\O{f(m)}$ time using \cref{theo: update B^F for F in U^l}.
    Thus, for each interval $I_i$, each of the linear functions $y_F$ with $F\subseteq U_i$ is known.
    Their upper envelope $y$ can be computed on one interval using the algorithm of \cite{hershberger1989finding} in $\O{k^\ell\ell^\ell\log k^\ell\ell^\ell} =\O{k^\ell\ell^{\ell+1}\log k\ell}$ time.
    In summary, the running time amounts to
    \begin{align*}
        &\mathcal{O}\Big(m^2\log m + k^\ell\ell^\ell mf(m) + m^2\big(f(m) + k^\ell\ell^\ell f(m) + k^\ell\ell^{\ell+1} \log k\ell\big)\Big) \\
        =\; &\mathcal{O} \Big(\m^2\big(k^\ell\ell^\ell(f(m)+\ell\log k\ell)+\log m \big)\Big).
    \end{align*}
\end{proof}

\textbf{Algorithm 3.}
Our third algorithm is based on \cref{lem: F* is union of Fi} and \cref{theo: correctness alg Fi}.
We again consider the subintervals between to consecutive equality points separately.
Using \cref{alg: compute F_i}, we obtain the relevant functions $y_F$ on each such subinterval in order to compute their upper envelope $y$.

\begin{theorem}
    \cref{prob: l-matroid} can be solved in time
    \begin{displaymath}
        \mathcal{O}\bigg( m^2 \Big( (k+\ell)^{\ell-1} \big( H(m) + k\ell\big(f(m)+\log(k+\ell)\big)\big) + m\ell f(m) \Big) \bigg).
    \end{displaymath}
\end{theorem}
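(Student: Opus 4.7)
The plan is to partition the parameter interval $I$ into the $\mathcal{O}(m^2)$ subintervals between consecutive equality points, run \cref{alg: compute F_i} separately on each such subinterval $J$ to obtain the non-dominated candidate sets together with their linear objective pieces $y_F|_J$, and then assemble $y|_J$ as the upper envelope of these linear pieces.

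First, I would compute all $\binom{m}{2}\in\mathcal{O}(m^2)$ equality points and sort them ascendingly in $\mathcal{O}(m^2\log m)$ time, and pre-sort $E$ by weight once at the start in $\mathcal{O}(m\log m)$ time. The latter sorted order can be maintained across equality points by a single swap each, so the $\mathcal{O}(m\log m)$ sorting term appearing inside \cref{theo: correctness alg Fi} need not be paid once per subinterval. On any open subinterval $J$ between two consecutive equality points, the optimal basis $B_\lambda^*$, the bases $B_\lambda^i$ from \cref{def: bases Bi of Ui}, and all replacement elements are constant in $\lambda$, so by \cref{cor: at most k+l choose l subsets} it suffices to consider the $k\binom{k+\ell-2}{\ell-1}\in\mathcal{O}(k(k+\ell)^{\ell-1})$ non-dominated candidate sets on $J$.

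On each such subinterval $J$, I would fix a representative $\lambda\in J$ and invoke \cref{alg: compute F_i}. By \cref{theo: correctness alg Fi}, and using the globally maintained sorted order of $E$, this produces all relevant candidates $F$ together with the linear functions $y_F|_J$ in time $\mathcal{O}\bigl((k+\ell)^{\ell-1}(H(m)+k\ell f(m))+m\ell f(m)\bigr)$. I would then compute the upper envelope $y|_J$ of these $\mathcal{O}(k(k+\ell)^{\ell-1})$ linear functions using the algorithm of \cite{hershberger1989finding}. Since the number of segments is $\mathcal{O}(k(k+\ell)^{\ell-1})$ and $\log(k(k+\ell)^{\ell-1})\in\mathcal{O}(\ell\log(k+\ell))$, this adds $\mathcal{O}(k\ell(k+\ell)^{\ell-1}\log(k+\ell))$ per subinterval, giving a per-subinterval total of $\mathcal{O}\bigl((k+\ell)^{\ell-1}(H(m)+k\ell(f(m)+\log(k+\ell)))+m\ell f(m)\bigr)$.

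Finally, I would concatenate the $\mathcal{O}(m^2)$ pieces to obtain $y$ on all of $I$; \cref{obs: y is upper env continuous} guarantees continuity across equality points so that no extra gluing work is required. Multiplying the per-subinterval cost by $\mathcal{O}(m^2)$ and adding the initial sorting costs yields the claimed bound. The only delicate bookkeeping point, and the one I would watch most carefully, is to ensure that the $\mathcal{O}(m\log m)$ term inside \cref{theo: correctness alg Fi} is paid globally rather than $\Omega(m^2)$ times; apart from that, the proof is a direct aggregation of the structural results of \cref{sec: structure} together with \cref{theo: correctness alg Fi}.
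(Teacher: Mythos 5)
Your proposal is correct and follows essentially the same route as the paper: sort the equality points, call \cref{alg: compute F_i} once per subinterval while maintaining the sorted order of $E$ incrementally so the $\mathcal{O}(m\log m)$ term is paid only once, take the upper envelope of the $\mathcal{O}(k(k+\ell)^{\ell-1})$ linear pieces per subinterval via \cite{hershberger1989finding}, and multiply by the $\mathcal{O}(m^2)$ subintervals. The bookkeeping point you flag about not re-sorting on every subinterval is exactly the observation the paper makes as well, and your bound arithmetic (including the $\log(k(k+\ell)^{\ell-1})\in\mathcal{O}(\ell\log(k+\ell))$ step) matches the theorem statement.
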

\begin{proof}
    We apply \cref{alg: compute y using F_i} to compute the optimal interdiction value function $y$.
    First, we compute the equality points $\lambda_1,\ldots,\lambda_r$ and sort them in ascending order in $\O{m^2\log m}$ time.
    Let $I_0\coloneqq (-\infty,\lambda_1]$, $I_i\coloneqq [\lambda_i,\lambda_{i+1}]$ for $1\leq i < r$, and $I_r\coloneqq [\lambda_r,\infty)$ be the corresponding intervals.

    Next, we call \cref{alg: compute F_i} once for each interval $I_i$.
    This takes $\O{\m(\ell f(\m)+\log m) + (k+\ell)^{\ell-1}(H(\m)+k\ell f(m))}$ time for the first interval $I_0$.
    For any other interval $I_i$ with $i>0$, the sorting of the edge weights can easily be updated at an equality point such that the running time reduces to $\O{\m\ell f(\m) + (k+\ell)^{\ell-1}(H(\m)+k\ell f(m))}$.
    On one interval $I_i$, the solution provides us with $k\binom{k+\ell-2}{\ell-1} \in \O{(k+\ell)^{\ell-1}k}$ many subsets $F\subseteq E$ with corresponding function $y_F$.
    Further, the upper envelope of these functions equals the optimal interdiction value function $y$.
    We use the algorithm of \cite{hershberger1989finding} to compute the upper envelope $y$ in $\O{(k+\ell)^{\ell-1}k\log((k+\ell)^{\ell-1}k)} = \O{(k+\ell)^{\ell-1}k\ell\log(k+\ell)}$ time.
    Overall, we obtain a total running time of
    \begin{align*}
        &\mathcal{O}\bigg( m^2 \Big(\ m\ell f(m) + (k+\ell)^{\ell-1} \big(\ H(m) + k\ell f(m) \big) + (k+\ell)^{\ell-1}k\ell \log(k+\ell) \Big)\bigg) \\
        =\; &\mathcal{O}\bigg( m^2 \Big( (k+\ell)^{\ell-1} \big( H(m) + k\ell\big(f(m)+k\ell\log(k+\ell)\big)\big) + m\ell f(m) \Big) \bigg).
    \end{align*}
\end{proof}

\begin{algorithm}[!tp] 
	\Input{A matroid \M{} with edge weights $w(e,\lambda)$ and parameter interval~$I$.} 
	\Output{A representation of the upper envelope $y$.}
	
	Compute all equality points $\lambda_1,\ldots,\lambda_r$ and sort them ascendingly\;
	Let $I_0,\ldots,I_r$ be the corresponding intervals\;
	\For{$i = 0, \dotsc, r$}{
		Apply \cref{alg: compute F_i} for a fixed $\lambda\in I_i$ to obtain the functions $y_F$ on $I_i$\;
		Compute $y$ on $I_i$\;
	}
	\Return $y$\;
	\caption{An algorithm for computing the optimal interdiction value function~$y$ using \cref{theo: correctness alg Fi}.
		\label{alg: compute y using F_i}}
\end{algorithm}

\textbf{Discussion.}
We finish this section with a brief discussion of our three algorithms.
For an arbitrary matroid, the running times are in general difficult to compare, as they depend on different matroid operations.
If an independence test can be performed in polynomial time, all our algorithms have polynomial running time for constant $\ell$.
This is due to the fact that a replacement element $r_\lambda(e)$ of a basis element $e\in B_\lambda^*$ can be identified by at most $m-k$ many independence tests of $B_\lambda^*-e+r$ for all $r\in E\setminus B_\lambda^*$.
Thus, we have $h(m)\in\O{mf(m)}$ and $H(m)\in\O{mkf(m)}$.

Even though, \cref{alg: compute y using F_i} uses more structure and, thus fewer relevant functions $y_F$ on each subinterval between two consecutive equality points, its runtime is not generally faster than that of \cref{alg: compute y using U_i}.
The reason for this is the efficient update of a subset $F\subseteq U_\lambda^{\ell-1}$ and the corresponding basis $B_\lambda^F$ at an equality point in $\O{f(m)}$ time.
In particular, the set $F$ and the basis $B_\lambda^F$ are only changed by a swap of $e$ and $f$ at an equality point $\lambda(e\to f)$.
In the following example, we use graphical matroids to show why a similar update with same running time complexity can in general not be performed for a subset $F\subseteq E$ of $\ell$ elements from \cref{cor: at most k+l choose l subsets}.
For graphical matroids, we refer to the replacement elements as replacement edges.
\begin{example}
    Consider the graph with parametric edge weights from \cref{fig: update Fi}.
    Let $\ell=3$.
    For simplicity, the graph is not 4-connected.
    However, this can easily be achieved by adding three parallels to each edge whose weight is equal to a large constant $M$.
    Then, in the considered subintervals, the replacement edges are given as in \cref{fig: Fi before 4} and \cref{fig: Fi after 4}.
    The (interdicted) minimum spanning trees are marked by the thick edges.
    The edge that is removed next is shown as a dashed line.
    
    For $\lambda\in [2,4)$, the minimum spanning tree $B_\lambda^*$ is given by the edges $a,b,c,e,g$.
    Further, it holds that $r_\lambda(g)=r$ with respect to $B_\lambda^*$ and $r_\lambda(r)=f$ with respect to $B_\lambda^{g}$ as well as $r_\lambda(f)=p$ with respect to $B_\lambda^{\Set{g,r}}$, see \cref{fig: Fi before 4}.
    Thus, for $\lambda\in [2,4)$, the set $F_1=\Set{g,r,f}$ is one of the sets from \cref{cor: at most k+l choose l subsets}.
    
    At the breakpoint $\lambda(e\to f)=4$, the edge $e$ leaves $B_\lambda^*$ and gets replaced by $f$.
    With a similar update as in \cref{alg: updateU} we exchange $f$ for $e$ in the set $F_1$ to update $F_1$ to $F_2=\Set{g,r,e}$.
    Note that this maintains the structure of the sets from \cref{cor: at most k+l choose l subsets}, as the replacement edge $r_\lambda(r)$ with respect to basis $B_\lambda^g$ changes from $f$ to $e$ at $\lambda=4$, see \cref{fig: Fi after 4}.
    However, the replacement edge $p$ of $f$ with respect to $B_\lambda^{\Set{g,r}}$ for $\lambda\in [2,4)$ differs from the replacement edge $q$ of $e$ with respect to $B_\lambda^{F_2}$ for $\lambda\in [4,6)$.
    Hence, the weights of the interdicted minimum spanning trees $B_\lambda^{F_1}$ and $B_\lambda^{F_2}$ are given by $w(B_\lambda^{F_1},\lambda)=7+2\lambda$ and $w(B_\lambda^{F_2},\lambda)=12+\lambda$.
    In particular, we have $w(B_4^{F_1},4)=15\neq 16= w(B_\lambda^{F_2},4)$ and in contrast to the update in \cref{theo: update B^F for F in U^l}, there is no continuous transition from basis $B_\lambda^{F_1}$ to $B_\lambda^{F_2}$ at $\lambda=4$.
    Consequently, the replacement edge of $e$ with respect to basis $B_\lambda^{F_2}$ must be re-computed.
    This does not result in an asymptotic improvement of the running time compared to a new call of \cref{alg: compute F_i}.
    In contrast, in \cref{alg: compute y using U_i}, the set $F_2=F_1$ remains relevant and the basis $B_\lambda^{F_2}$ equals the basis $B_\lambda^{F_2}$.
\end{example}

\begin{figure}
    \centering
        \begin{tikzpicture}[scale=1,every node/.style={scale=1,circle}]
            \graph  [nodes={draw,circle,fill=black!25,empty nodes}, edges={black!50}, counterclockwise, radius=8cm] 
            {subgraph C_n [n=5,m=3,radius=4cm,name=A]};
            \node at  ($(A 3)!.5!(A 4)$) (B){};
            \node[draw,circle,fill=black!25] at ($(B)!.5!(A 1)$) (A 6){};
            \foreach \i in {1,3,4}{
                \draw (A 6) -- (A \i);
            }
            \path[black!50] (A 1) edge node[left] {\textcolor{black}{$w(c,\lambda)=2$}} (A 2);
            \path[black!50] (A 2) edge node[left] {\textcolor{black}{$w(r,\lambda)=4$}} (A 3);
            \path[black!50] (A 3) edge node[below,transform canvas={yshift=0.5cm}] {\textcolor{black}{$w(p,\lambda)=7$}} (A 4);
            \path[black!50] (A 4) edge node[right] {\textcolor{black}{$w(b,\lambda)=1$}} (A 5);
            \path[black!50] (A 5) edge node[right] {\textcolor{black}{$w(e,\lambda)=-3+2\lambda$}} (A 1);
            \path[black!50] (A 6) edge node[left,pos=0.2] {\textcolor{black}{$w(a,\lambda)=0$}} (A 3);
            \path[black!50] (A 6) edge node[left,pos=0.8] {\textcolor{black}{$w(f,\lambda)=1+\lambda$}} (A 4);
            \path[black!50] (A 1) edge[bend left] node[left] {\textcolor{black}{$w(q,\lambda)=8$}} (A 4);
            \path[black!50] (A 1) edge node[left] {\textcolor{black}{$w(g,\lambda)=3$}} (A 6);
        \end{tikzpicture}
    \caption{A graph with parametric edge weights, where $\ell=3$ edges are allowed to be interdicted.}
    \label{fig: update Fi}
\end{figure}
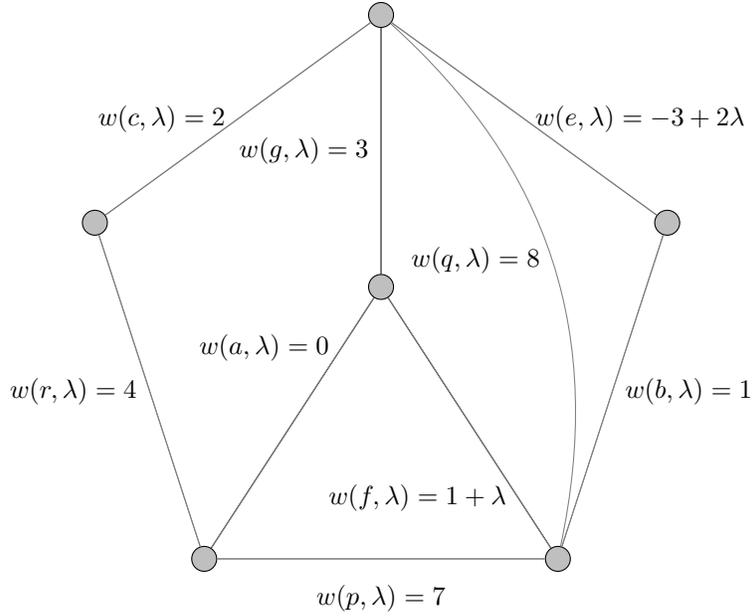

\begin{figure}
    \centering
    \begin{subfigure}[b]{0.23\textwidth}
        \centering
        \begin{tikzpicture}[scale=0.5,every node/.style={scale=1,circle}]
            \graph  [nodes={draw,circle,fill=black!25,empty nodes}, edges={black!50}, counterclockwise, radius=8cm] 
            {subgraph C_n [n=5,m=3,radius=1.3cm,name=A]};
            \node at  ($(A 3)!.5!(A 4)$) (B){};
            \node[draw,circle,fill=black!25] at ($(B)!.5!(A 1)$) (A 6){};
            \foreach \i in {3,4}{
                \draw (A 6) -- (A \i);
            }
            \path[black,line width=2] (A 1) edge (A 2);
            \path[black!50] (A 2) edge (A 3);
            \path[black!50] (A 3) edge (A 4);
            \path[black,line width=2] (A 4) edge (A 5);
            \path[black,line width=2] (A 5) edge (A 1);
            \path[dashed,black,line width=2] (A 1) edge (A 6);
            \path[black,line width=2] (A 6) edge (A 3);
            \path[black!50] (A 6) edge (A 4);
            \path[black!50] (A 1) edge[bend left] (A 4);
        \end{tikzpicture}
        \caption{$B_\lambda^*$ \\ \hspace{\textwidth}}
        \label{fig: B before 4}
    \end{subfigure}
    \hfill
    \begin{subfigure}[b]{0.23\textwidth}
        \centering
            \begin{tikzpicture}[scale=0.5,every node/.style={scale=1,circle}]
            \graph  [nodes={draw,circle,fill=black!25,empty nodes}, edges={black!50}, counterclockwise, radius=8cm] 
            {subgraph C_n [n=5,m=3,radius=1.3cm,name=A]};
            \node at  ($(A 3)!.5!(A 4)$) (B){};
            \node[draw,circle,fill=black!25] at ($(B)!.5!(A 1)$) (A 6){};
            \foreach \i in {3,4}{
                \draw (A 6) -- (A \i);
            }
            \path[black,line width=2] (A 1) edge (A 2);
            \path[white,line width=2] (A 2) edge (A 3);
            \path[dashed,black,line width=2] (A 2) edge (A 3);
            \path[black!50] (A 3) edge (A 4);
            \path[black,line width=2] (A 4) edge (A 5);
            \path[black,line width=2] (A 5) edge (A 1);
            \path[black,line width=2] (A 6) edge (A 3);
            \path[black!50] (A 6) edge (A 4);
            \path[black!50] (A 1) edge[bend left] (A 4);
        \end{tikzpicture}
        \caption{$B_\lambda^*-g+r$ \\ \hspace{\textwidth}}
        \label{fig: B-g+r before 4}
    \end{subfigure}
    \hfill
    \begin{subfigure}[b]{0.23\textwidth}
        \centering
            \begin{tikzpicture}[scale=0.5,every node/.style={scale=1,circle}]
            \graph  [nodes={draw,circle,fill=black!25,empty nodes}, edges={black!50}, counterclockwise, radius=8cm] 
            {subgraph C_n [n=5,m=3,radius=1.3cm,name=A]};
            \node at  ($(A 3)!.5!(A 4)$) (B){};
            \node[draw,circle,fill=black!25] at ($(B)!.5!(A 1)$) (A 6){};
            \foreach \i in {3,4}{
                \draw (A 6) -- (A \i);
            }
            \path[black,line width=2] (A 1) edge (A 2);
            \path[black!50,thin] (A 3) edge (A 4);
            \path[black,line width=2] (A 4) edge (A 5);
            \path[black,line width=2] (A 5) edge (A 1);
            \path[black,line width=2] (A 6) edge (A 3);
            \path[white,line width=2] (A 6) edge (A 4);
            \path[dashed,black,line width=2] (A 6) edge (A 4);
            \path[black!50] (A 1) edge[bend left] (A 4);
            \draw[white,line width=2] (A 2) -- (A 3);
        \end{tikzpicture}
        \caption{$B_\lambda^*-g+r-r+f$ \\ \hspace{\textwidth}}
        \label{fig: B-g+r-r+f before 4}
    \end{subfigure}
    \hfill
    \begin{subfigure}[b]{0.23\textwidth}
        \centering
            \begin{tikzpicture}[scale=0.5,every node/.style={scale=1,circle}]
            \graph [nodes={draw,circle,fill=black!25,empty nodes}, edges={black!50}, counterclockwise, radius=8cm] 
            {subgraph C_n [n=5,m=3,radius=1.3cm,name=A]};
            \node at  ($(A 3)!.5!(A 4)$) (B){};
            \node[draw,circle,fill=black!25] at ($(B)!.5!(A 1)$) (A 6){};
            \foreach \i in {3,4}{
                \draw (A 6) -- (A \i);
            }
            \path[black,line width=2] (A 1) edge (A 2);
            \path[white,line width=2] (A 2) edge (A 3);
            \path[black,line width=2] (A 3) edge (A 4);
            \path[black,line width=2] (A 4) edge (A 5);
            \path[black,line width=2] (A 5) edge (A 1);
            \path[black,line width=2] (A 6) edge (A 3);
            \path[white,line width=2] (A 6) edge (A 4);
            \path[black!50] (A 1) edge[bend left] (A 4);
        \end{tikzpicture}
        \caption{$B_\lambda^*-g+r-r+f-f+p$}
        \label{fig: B-g+r-r+f-f+p before 4}
    \end{subfigure}
    \caption{Before the breakpoint $\lambda(e\to f)=4$, the set $F_1=\Set{g,r,f}$ is one of the sets from \cref{cor: at most k+l choose l subsets}.
    The interdicted minimum spanning tree $B_\lambda^{F_1}$ with weight $w(B_\lambda^{F_1},\lambda)=7+2\lambda$ is given by the edges $a,b,c,e,p$, cf.\ \cref{fig: B-g+r-r+f-f+p before 4}.}
    \label{fig: Fi before 4}
\end{figure}

\begin{figure}
    \centering
    \begin{subfigure}[b]{0.23\textwidth}
        \centering
        \begin{tikzpicture}[scale=0.5,every node/.style={scale=1,circle}]
            \graph  [nodes={draw,circle,fill=black!25,empty nodes}, edges={black!50}, counterclockwise, radius=8cm] 
            {subgraph C_n [n=5,m=3,radius=1.3cm,name=A]};
            \node at  ($(A 3)!.5!(A 4)$) (B){};
            \node[draw,circle,fill=black!25] at ($(B)!.5!(A 1)$) (A 6){};
            \foreach \i in {3,4}{
                \draw (A 6) -- (A \i);
            }
            \path[black,line width=2] (A 1) edge (A 2);
            \path[black!50] (A 2) edge (A 3);
            \path[black!50] (A 3) edge (A 4);
            \path[black,line width=2] (A 4) edge (A 5);
            \path[black!50] (A 5) edge (A 1);
            \path[dashed,black,line width=2] (A 1) edge (A 6);
            \path[black,line width=2] (A 6) edge (A 3);
            \path[black,line width=2] (A 6) edge (A 4);
            \path[black!50] (A 1) edge[bend left] (A 4);
        \end{tikzpicture}
        \caption{$B_\lambda^*$ \\ \hspace{\textwidth}}
        \label{fig: B after 4}
    \end{subfigure}
    \hfill
    \begin{subfigure}[b]{0.23\textwidth}
        \centering
            \begin{tikzpicture}[scale=0.5,every node/.style={scale=1,circle}]
            \graph  [nodes={draw,circle,fill=black!25,empty nodes}, edges={black!50}, counterclockwise, radius=8cm] 
            {subgraph C_n [n=5,m=3,radius=1.3cm,name=A]};
            \node at  ($(A 3)!.5!(A 4)$) (B){};
            \node[draw,circle,fill=black!25] at ($(B)!.5!(A 1)$) (A 6){};
            \foreach \i in {3,4}{
                \draw (A 6) -- (A \i);
            }
            \path[black,line width=2] (A 1) edge (A 2);
            \path[white,line width=2] (A 2) edge (A 3);
            \path[dashed,black,line width=2] (A 2) edge (A 3);
            \path[black!50] (A 3) edge (A 4);
            \path[black,line width=2] (A 4) edge (A 5);
            \path[black!50] (A 5) edge (A 1);
            \path[black,line width=2] (A 6) edge (A 3);
            \path[black,line width=2] (A 6) edge (A 4);
            \path[black!50] (A 1) edge[bend left] (A 4);
        \end{tikzpicture}
        \caption{$B_\lambda^*-g+r$ \\ \hspace{\textwidth}}
        \label{fig: B-g+r after 4}
    \end{subfigure}
    \hfill
    \begin{subfigure}[b]{0.23\textwidth}
        \centering
            \begin{tikzpicture}[scale=0.5,every node/.style={scale=1,circle}]
            \graph  [nodes={draw,circle,fill=black!25,empty nodes}, edges={black!50}, counterclockwise, radius=8cm] 
            {subgraph C_n [n=5,m=3,radius=1.3cm,name=A]};
            \node at  ($(A 3)!.5!(A 4)$) (B){};
            \node[draw,circle,fill=black!25] at ($(B)!.5!(A 1)$) (A 6){};
            \foreach \i in {3,4}{
                \draw (A 6) -- (A \i);
            }
            \path[black,line width=2] (A 1) edge (A 2);
            \path[black!50,thin] (A 3) edge (A 4);
            \path[black,line width=2] (A 4) edge (A 5);
            \path[white,line width=2] (A 5) edge (A 1);
            \path[dashed,black,line width=2] (A 5) edge (A 1);
            \path[black,line width=2] (A 6) edge (A 3);
            \path[white,line width=2] (A 6) edge (A 4);
            \path[black,line width=2] (A 6) edge (A 4);
            \path[black!50] (A 1) edge[bend left] (A 4);
            \draw[white,line width=2] (A 2) -- (A 3);
        \end{tikzpicture}
        \caption{$B_\lambda^*-g+r-r+e$ \\ \hspace{\textwidth}}
        \label{fig: B-g+r-r+e after 4}
    \end{subfigure}
    \hfill
    \begin{subfigure}[b]{0.23\textwidth}
        \centering
            \begin{tikzpicture}[scale=0.5,every node/.style={scale=1,circle}]
            \graph [nodes={draw,circle,fill=black!25,empty nodes}, edges={black!50}, counterclockwise, radius=8cm] 
            {subgraph C_n [n=5,m=3,radius=1.3cm,name=A]};
            \node at  ($(A 3)!.5!(A 4)$) (B){};
            \node[draw,circle,fill=black!25] at ($(B)!.5!(A 1)$) (A 6){};
            \foreach \i in {3,4}{
                \draw (A 6) -- (A \i);
            }
            \path[black,line width=2] (A 1) edge (A 2);
            \path[white,line width=2] (A 2) edge (A 3);
            \path[black!50] (A 3) edge (A 4);
            \path[black,line width=2] (A 4) edge (A 5);
            \path[white,line width=2] (A 5) edge (A 1);
            \path[black,line width=2] (A 6) edge (A 3);
            \path[white,line width=2] (A 6) edge (A 4);
            \path[black,line width=2] (A 6) edge (A 4);
            \path[black,line width=2] (A 1) edge[bend left] (A 4);
        \end{tikzpicture}
        \caption{$B_\lambda^*-g+r-r+e-e+q$}
        \label{fig: B-g+r-r+e-e+q after 4}
    \end{subfigure}
    \caption{After the breakpoint $\lambda=4$, the set $F_1$ is updated to $F_2=\Set{g,r,e}$ to maintain the structure from \cref{cor: at most k+l choose l subsets}.
    The corresponding interdicted minimum spanning tree $B_\lambda^{F_2}$ with weight $w(B_\lambda^{F_2},\lambda)=12+\lambda$ is given by the edges $a,b,c,f,q$ and, therefore, differs in the edges $f$ and $q$ from $B_\lambda^{F_1}$, cf.\ \cref{fig: B-g+r-r+e-e+q after 4}.
    In particular, the replacement edge of $e$ with respect to $B_\lambda^{\Set{g,r}}$ before $\lambda=4$ differs from that of $f$ with respect to $B_\lambda^{\Set{g,r}}$ after $\lambda=4$ and must be re-computed.}
    \label{fig: Fi after 4}
\end{figure}

\section{Conclusion}
In this article, we have investigated the parametric matroid $\ell$-interdiction problem for a fixed natural number $\ell$.
Given an arbitrary matroid whose elements are associated with a weight that depends linearly on a real parameter from a given interval, the goal is to compute, for each parameter value, a set of $\ell$-most vital elements and the corresponding objective value.
Such a set consists of $\ell$ elements whose removal increases the weight of a minimum weight basis as much as possible.
We have shown that there are at most $\mathcal{O}(\m^{\ell+1} k^{\frac{1}{3}}\alpha(k\ell))$ and $\O{m^2(k+\ell)^{\ell-1}k}$ many changepoints at which the optimal solution to the problem changes.
Further, we have developed three exact algorithms the running times of which depend on different matroid operations.
All algorithms have polynomial running time if a single independence test can be performed in polynomial time.

\section*{Acknowledgments}
This work was partially supported by the project "Ageing Smart -- Räume intelligent gestalten" funded by the Carl Zeiss Foundation and the DFG grant RU 1524/8-1, project number 508981269. The authors would like to thank Oliver Bachtler for helpful discussions and the resulting improvement of the article.
	
\bibliography{bib_matroid}
	
\end{document}